\newcommand\C{{\mathbb C}}
\newcommand\Q{{\mathbb Q}}
\newcommand\Z{{\mathbb Z}}
\newcommand\N{{\mathbb N}}
\newcommand\wH{{\rm H}}
\newcommand\cM{\mathcal{M}}
\newcommand\cD{\mathcal{D}}
\newtheorem{theorem}{Theorem}[section]
\newtheorem{lemma}[theorem]{Lemma}
\newtheorem{corollary}[theorem]{Corollary}
\newtheorem{conjecture}[theorem]{Conjecture}
\theoremstyle{remark}
\numberwithin{equation}{section}
\begin{document}

\title[Multiplicative dependence of translations]{Multiplicative dependence of the translations of algebraic numbers}

\author{Art\= uras Dubickas}
\address{Institute of Mathematics, Faculty of Mathematics and Informatics, Vilnius University, Naugarduko 24, LT-03225 Vilnius, Lithuania}
\email{arturas.dubickas@mif.vu.lt}

\author{Min Sha}
\address{Department of Computing, Macquarie University, Sydney, NSW 2109, Australia}
\email{shamin2010@gmail.com}

\keywords{Multiplicative dependence, multiplicative independence, Pillai's equation, $ABC$ conjecture}

\subjclass[2010]{11N25, 11R04, 11D61}

\begin{abstract}
In this paper, we first prove that given pairwise distinct algebraic numbers $\alpha_1, \ldots, \alpha_n$, 
the numbers $\alpha_1+t, \ldots, \alpha_n+t$ are multiplicatively independent for all sufficiently large integers $t$. 
Then, for a pair $(a,b)$ of distinct integers, we study how many pairs $(a+t,b+t)$ are multiplicatively dependent when $t$ runs through the set integers $\Z$. 
Assuming the $ABC$ conjecture we show that there exists a constant $C_1$ such that for any pair $(a,b)\in \Z^2$, $a \ne b$, 
 there are at most $C_1$ values of $t \in \Z$ such that $(a+t,b+t)$ are multiplicatively dependent. 
For a pair $(a,b) \in \Z^2$ with difference $b-a=30$ we show that there are $13$
values of $t \in  \Z$ for which the pair $(a+t,b+t)$
is multiplicatively dependent. We further conjecture that $13$ is the largest number of such translations for any such pair $(a,b)$ and prove this for all pairs
$(a,b)$ with difference at most $10^{10}$. 
\end{abstract}

\maketitle

\section{Introduction}  \label{sec:int}

Given $n \ge 1$ non-zero complex numbers $z_1, \ldots, z_n \in \C^*$, we say that they are \textit{multiplicatively dependent} if there exists a non-zero integer vector $(k_1,\dots,k_n) \in \Z^n$ for which
\begin{equation}  \label{eq:MultDep}
z_1^{k_1}\cdots z_n^{k_n}=1.
\end{equation}  
Otherwise (if there is no such non-zero integer vector  $(k_1,\dots,k_n)$), we say that the numbers $z_1,\dots,z_n$ are \textit{multiplicatively independent}.  
Consequently, a vector in $\C^n$ is called \textit{multiplicatively dependent} (resp. \textit{independent}) if its coordinates are all non-zero and  are multiplicatively dependent (resp. independent). To avoid confusion, the vectors with zero coordinates, like $(0,1)$, are not considered to be multiplicatively dependent (although, by convention, $0^0 1^1=1$)
or independent. 

In~\cite{PSSS1}, several asymptotic formulas for the number of multiplicatively dependent vectors of  algebraic numbers of fixed degree  (or lying in a fixed number field) and bounded height have been obtained. In an ongoing project \cite{SSS2}, the authors continue to study multiplicatively dependent vectors from the viewpoint of their density and sparsity. 
By contrast, in this paper aside from the multiplicative dependence and independence of a given set of algebraic numbers we also want to investigate the multiplicative dependence and independence of their translations. More generally, the authors in \cite{OSSZ} study multiplicative dependence of values of rational functions in some special cases.
We remark that a method on deciding the multiplicative independence of complex numbers in a finitely generated field  has been 
proposed by Richardson \cite{Richardson}. 

In Section~\ref{sec:ind} (Theorem~\ref{thm:mult ind1}), we prove a result which implies that given pairwise distinct algebraic numbers $\alpha_1, \ldots, \alpha_n, n\ge 2$, for each sufficiently large integer $t$, 
the algebraic numbers $\alpha_1+t, \ldots, \alpha_n+t$ are multiplicatively independent. 
This is in fact a special case of \cite[Theorem 1']{BMZ}. A weaker version of this statement given in \cite[Lemma 2.1]{Dubickas} was used 
in \cite{Dubickas} and so it is an additional motivation for 
Theorem~\ref{thm:mult ind1}.
In particular, by Theorem~\ref{thm:mult ind1}, for an integer vector $(a_1,\ldots,a_n)$ whose coordinates are pairwise distinct, there are only finitely many integers $t$ for which the numbers $a_1+t,\ldots, a_n+t$ are multiplicatively dependent. 
So, a natural question is to estimate the number of such integers $t$ corresponding to a given integer vector. 
In this paper, we investigate in detail the case of dimension $n=2$ by presenting some explicit formulas, upper bounds and several conjectures. See Theorems~\ref{thm:size2}, \ref{thm:size23}, \ref{thm:upper} and \ref{thm:uniform}. 
For example, we conjecture that for any pair of distinct integers $(a,b) \in \Z^2$, the number of such integer translations $t$ is at most 13, 
which is in fact related to two special forms of Pillai's equation. 
The pair $(a,b)=(1,31)$ is an example which has exactly $13$ integer translations leading to multiplicatively dependent vectors (see Section~\ref{sec:set}).

\section{Preliminaries}  \label{sec:pre}

For the convenience of the reader, we recall some basic concepts and results in this section, which are used later on. 

For any algebraic number $\alpha$ of degree $\deg \alpha =m\ge 1$, let
$$
f(x)=a_mx^m+\cdots+a_1x+a_0
$$
be the minimal polynomial of $\alpha$ over the integers $\Z$, where $a_m>0$. Suppose that $f$ is factored as
$$
f(x)=a_m(x-\alpha_1)\cdots (x-\alpha_m)
$$
over the complex numbers $\mathbb{C}$. 
The \textit{height} of $\alpha$, also known as the \textit{absolute Weil height} of $\alpha$ and denoted by $\wH(\alpha)$, is defined by
\begin{equation*}
\wH(\alpha)=\big(a_m\prod_{i=1}^{m}\max\{1,|\alpha_i|\}\big)^{1/m}.
\end{equation*}
Besides, we define the \textit{house} of  $\alpha$ to be the maximum of the modulus of its conjugates: 
$$
\overline{|\alpha|} = \max \{ |\alpha_1|, \ldots, |\alpha_m| \}; 
$$
see \cite[Section 3.4]{Waldschmidt2000}. Clearly, if $|a_0/a_m|\ge 1$ we have 
$$
\wH(\alpha) \le a_m^{1/m} \overline{|\alpha|}.
$$
In particular, for any algebraic integer $\alpha \ne 0$ we have $\wH(\alpha) \le \overline{|\alpha|}$.

The next result shows that if algebraic numbers $\alpha_1,\ldots,\alpha_n$ are multiplicatively dependent, then one can find a relation as in \eqref{eq:MultDep}, where the exponents $k_i$, $i=1,\dots,n$, are  not too large; 
see for example~\cite[Theorem 3]{Loxton} or \cite[Theorem~1]{Poorten}.

\begin{lemma} 
\label{lem:exponent}
Let $n\geq 2$, and let $\alpha_1,\ldots,\alpha_n$ be multiplicatively dependent non-zero algebraic numbers of height at most $H \ge 2$ 
and contained in a number field $K$ of degree $D$ over the rational numbers $\Q$. Then, there are $k_1,\ldots,k_n \in \Z$, not all zero, and  a positive number $c_1$ which depends only on $n$,  
such that
\begin{equation}\label{mkoi}
\alpha^{k_1}_1\cdots\alpha^{k_n}_n=1
\end{equation}
and
\begin{equation}  
\label{eq:exp1}
\max_{1\leq i\leq n}|k_i| \le c_1D^n(\log (D+1))^{3(n-1)}(\log H)^{n-1}.
\end{equation}
Furthermore, if $K$ is totally real, then there are integers $k_1,\ldots, k_n$, not all zero, as in \eqref{mkoi} and  a positive number $c_2$ which depends only on $n$ such that 
\begin{equation}  
\label{eq:exp2}
\max_{1\leq i\leq n}|k_i| \le c_2(\log H)^{n-1}.
\end{equation}
\end{lemma}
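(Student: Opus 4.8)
The plan is to realise the multiplicative relations among $\alpha_1,\dots,\alpha_n$ as a lattice and to extract a short relation by the geometry of numbers. Write $\Gamma=\langle\alpha_1,\dots,\alpha_n\rangle\subseteq K^\ast$, let $r=\operatorname{rank}\Gamma$, and note $r\le n-1$ because the $\alpha_i$ are multiplicatively dependent. The relations
\[
\Lambda=\{(k_1,\dots,k_n)\in\Z^n:\ \alpha_1^{k_1}\cdots\alpha_n^{k_n}=1\}
\]
form a lattice of rank $n-r\ge 1$, so Minkowski's first theorem produces a nonzero $\mathbf k\in\Lambda$ with $\max_i|k_i|\le c(n)(\det\Lambda)^{1/(n-r)}$. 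Since $n-r\ge 1$ the worst case is $r=n-1$, and everything reduces to an upper bound for $\det\Lambda$: the goal is $\det\Lambda\le c(n)\bigl(D(\log(D+1))^{3}\log H\bigr)^{\,n-1}$ in general, and $\det\Lambda\le c(n)(\log H)^{n-1}$ when $K$ is totally real, which then yield \eqref{eq:exp1} and \eqref{eq:exp2}.

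To bound $\det\Lambda$ I would pass to the logarithmic embedding. Writing $h(\alpha):=\log\wH(\alpha)$ and using normalised absolute values, let $S$ be the set of places $v$ of $K$ that are archimedean or satisfy $|\alpha_i|_v\ne 1$ for some $i$; the product formula and $h(\alpha_i)\le\log H$ give $\#S\ll D\log H$. Consider $L\colon\Z^n\to\R^{S}$, $L(\mathbf k)=\bigl(\sum_i k_i\log\|\alpha_i\|_v\bigr)_{v\in S}$. Then $\Lambda_0:=\ker L\cap\Z^n$ is a lattice in a rational subspace, it contains $\Lambda$, and $[\Lambda_0:\Lambda]$ divides the number $w_K$ of roots of unity of $K$. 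Factoring $L$ through the isomorphism $\Z^n/\Lambda_0\cong\log\Gamma$ and comparing covolumes yields $\det\Lambda_0=J/R_\Gamma$, where $R_\Gamma$ is the covolume (a regulator) of the rank‑$r$ lattice $\log\Gamma$ and $J$ is the Jacobian of $L$ restricted to the orthogonal complement of $\ker L$. Hadamard's inequality bounds $J$ by the product of the $r$ nonzero singular values of $L$, hence by $\|L\|_F^{\,r}$, and the identity $\sum_v|\log\|\alpha\|_v|=2h(\alpha)$ with $h(\alpha_i)\le\log H$ gives $\|L\|_F\ll_n\log H$, so $J\ll_n(\log H)^{r}$. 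Consequently $\det\Lambda\le w_K\,J/R_\Gamma\ll_n w_K(\log H)^{r}/R_\Gamma$.

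The crux — and the step I expect to be the main obstacle — is a lower bound for the regulator $R_\Gamma$ with the right dependence on $D$. By Minkowski's second theorem $R_\Gamma\asymp_r\lambda_1\cdots\lambda_r$ for the successive minima $\lambda_j$ of $\log\Gamma$, and since $\lambda_j\ge\lambda_1$ it is enough to bound $\lambda_1=\min\{\|\log\gamma\|_2:\gamma\in\Gamma\text{ not a root of unity}\}$ from below. For such a $\gamma$ one has $\|\log\gamma\|_2\ge\|\log\gamma\|_1/\sqrt{\#\operatorname{supp}\gamma}=2h(\gamma)/\sqrt{\#\operatorname{supp}\gamma}$, and here I would invoke an explicit lower bound for the Weil height of a non‑root‑of‑unity: the Dobrowolski–Voutier bound $h(\gamma)\gg D^{-1}(\log(D+1))^{-3}$ in general, and Schinzel's absolute bound $h(\gamma)\ge\frac12\log\frac{1+\sqrt5}{2}$ when $K$ is totally real, the support of $\gamma$ being controlled by the same height bound together with the estimate for $\#S$. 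This gives $R_\Gamma\gg_n\bigl(D(\log(D+1))^{3}\bigr)^{-r}$ in general and $R_\Gamma\gg_n 1$ in the totally real case, where moreover $w_K=2$. Substituting these into the bound for $\det\Lambda$ of the previous paragraph, and recalling $r\le n-1$ with worst case $n-r=1$, gives the two claimed estimates for $\max_i|k_i|$. For the careful bookkeeping of the exponents and the precise regulator bounds I would follow \cite{Loxton} and \cite{Poorten}.
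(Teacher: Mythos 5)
Your overall strategy --- realizing the relations as a sublattice of $\Z^n$, extracting a short vector by geometry of numbers, and feeding in a Dobrowolski--Voutier (resp.\ Schinzel) lower bound for the height of a non-torsion element --- is indeed the mechanism behind the result, but it is not what the paper does: the paper's entire proof is the observation that $\varphi(w(K))\le D$ forces $w(K)\ll D\log\log(3D)$, after which \eqref{eq:exp1} and \eqref{eq:exp2} are quoted verbatim from Theorem~3\,(A) and (B) of \cite{Loxton}. So you are reproving the cited theorem rather than invoking it. That is legitimate in principle, but your implementation has a genuine quantitative gap, and it sits exactly where you predicted the main obstacle would be.

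The gap is in the lower bound for $R_\Gamma$. You correctly record $\|\log\gamma\|_2\ge 2h(\gamma)/\sqrt{\#\operatorname{supp}\gamma}$ with $\#\operatorname{supp}\gamma\le\#S\ll_n D\log H$, but you then silently discard the factor $\sqrt{\#\operatorname{supp}\gamma}$ when asserting $R_\Gamma\gg_n\bigl(D(\log(D+1))^{3}\bigr)^{-r}$ in general and $R_\Gamma\gg_n 1$ in the totally real case. Keeping it, your chain of inequalities only gives $\det\Lambda\ll_n w_K\bigl(D(\log(D+1))^{3}\log H\bigr)^{r}(D\log H)^{r/2}$, i.e.\ an extra factor $(D\log H)^{(n-1)/2}$ in \eqref{eq:exp1} and, worse, a dependence on $D$ (and the power $(\log H)^{3(n-1)/2}$) in \eqref{eq:exp2}, which the statement does not permit. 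The loss is not mere bookkeeping: already for a unit $\gamma$ whose conjugates share the height evenly over the $D$ archimedean places one has $\|\log\gamma\|_2\asymp h(\gamma)/\sqrt{D}$, so the Euclidean covolume of $\log\Gamma$ genuinely can be much smaller than $h$-based heuristics suggest. The standard repair --- essentially what \cite{Loxton} does --- is to drop the coordinate embedding into $\R^{S}$ and run the counting/pigeonhole argument directly on the rank-$r$ group $\Gamma/\Gamma_{\mathrm{tors}}$ normed by the Weil height itself (which is a genuine norm there, with first minimum bounded below by Dobrowolski--Voutier, resp.\ Schinzel, with no loss in $\#S$). Finally, your general bound still carries the unestimated index $[\Lambda_0:\Lambda]\le w_K$; bounding $w_K\ll D\log\log(3D)$ via $\varphi(w_K)\le D$ is the one computation the paper actually performs, and it is needed to land within $c_1D^{n}(\log(D+1))^{3(n-1)}(\log H)^{n-1}$.
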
 

\begin{proof}
Let $w(K)$ be the number of roots of unity in $K$. 
Note that for Euler's totient function $\varphi$ we have $\varphi(m)\gg m/\log\log m$ for any $m\ge 3$. 
Since $\varphi(w(K)) \le D$, we obtain $w(K)\ll D\log\log (3D)$. 
Then, using \cite[Theorem 3 (A)]{Loxton} we can get \eqref{eq:exp1}. 
In the same fashion, \eqref{eq:exp2} follows directly from \cite[Theorem 3 (B)]{Loxton}. 
\end{proof}

The following statement is Mih{\u a}ilescu's theorem (previously known as Catalan's conjecture) \cite{Mih}, 
which roughly says that $(2^3,3^2)$ is the only case of two consecutive powers of natural integers.  

\begin{lemma}[\cite{Mih}] \label{lem:Catalan}
The equation 
$$
b^y - a^x = 1
$$ 
with unknowns
$b\ge 1, y \ge 2,  a \ge 1, x \ge2$ 
has only one integer solution $(a,b,x,y)=(2,3,3,2)$. 
\end{lemma}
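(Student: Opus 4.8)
The plan is to strip away the elementary and classical cases and then invoke the deep cyclotomic machinery that underlies Mih{\u a}ilescu's proof. If $a=1$ the equation forces $b^y=2$, impossible for $y\ge 2$; if $b=1$ it forces $a^x=0$; so we may assume $a,b\ge 2$. A solution with a composite exponent yields one with a prime exponent (replace $a$ by $a^{x/\ell}$ for a prime $\ell\mid x$, and similarly for $y$), so we may assume $x$ and $y$ are primes. If $x=y=2$ then $(b-a)(b+a)=1$, impossible for $a,b\ge 2$. If $x=2$ and $y$ is an odd prime, then $b^y=a^2+1$; working in $\Z[i]$ one checks $a+i$ and $a-i$ are coprime, each is a $y$-th power times a unit, and comparing imaginary parts gives a contradiction (Lebesgue). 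If $y=2$ and $x$ is an odd prime, then $a^x=(b-1)(b+1)$, and Ko Chao's theorem leaves only $a^x=2^3$, $b=3$, i.e.\ exactly $(a,b,x,y)=(2,3,3,2)$. So it remains to show that $B^q-A^p=1$ has no solution in integers $A,B\ge 2$ with $p,q$ odd primes; this is Mih{\u a}ilescu's theorem proper, which I would attack as follows.

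First, Cassels' relations. Writing $A^p=B^q-1=(B-1)\Phi_q(B)$ with $\Phi_q(B)=1+B+\cdots+B^{q-1}$, and using $\gcd(B-1,\Phi_q(B))\mid q$, one shows $p\mid B$, $q\mid A$, and that there are positive integers with $B-1=q^{p-1}s^p$, $\Phi_q(B)=q\,t^p$, and symmetrically $A+1=p^{q-1}u^q$; in particular $B>q^{p-1}$ and $A>p^{q-1}-1$, lower bounds that a final size estimate must contradict. Second, the cyclotomic field. Pass to $K=\Q(\zeta_q)$ with $\gal(K/\Q)\cong(\Z/q\Z)^{*}$ and the prime $\mathfrak l=(1-\zeta_q)$ over $q$. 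Factoring $A^p=B^q-1=\prod_{i=0}^{q-1}(B-\zeta_q^{i})$ and bounding the common divisors of the factors (all supported on $\mathfrak l$), one produces an ideal $\mathfrak b$ of $\OO_K$ and an explicit $\alpha\in K^{*}$, essentially $(B-\zeta_q)$ divided by a power of $(1-\zeta_q)$, with $(\alpha)=\mathfrak b^{\,p}$; hence the ideal class of $\mathfrak b$ is killed by $p$.

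Third, Stickelberger and the finish. The Stickelberger ideal of $\Z[\gal(K/\Q)]$ annihilates $\mathrm{Cl}(K)$, so $\mathfrak b^{\theta}=(\gamma)$ is principal for a suitable $\theta$ in that ideal; combined with $(\alpha)=\mathfrak b^{p}$ this gives $\alpha^{\theta}=\eta\,\gamma^{p}$ for a unit $\eta$. Passing to the minus part (replacing $\theta$ by $(1-\iota)\theta$ with $\iota$ complex conjugation) makes $\eta$ a root of unity times a real unit, and a careful comparison of archimedean and $\mathfrak l$-adic valuations on the two sides of $\alpha^{\theta}=\eta\gamma^{p}$ forces the Wieferich-type congruence $q^{p-1}\equiv 1\pmod{p^{2}}$; the symmetric argument in $\Q(\zeta_p)$ gives $p^{q-1}\equiv 1\pmod{q^{2}}$. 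To conclude one must descend to the maximal real subfield $\Q(\zeta_q)^{+}$: either one invokes Vandiver's conjecture for the primes at hand (known far enough for the purpose) or, as Mih{\u a}ilescu did unconditionally, one exploits a precise structure theory of primary cyclotomic units together with a Thaine-type theorem to show that $\mathfrak b$ is in fact principal with a generator so close to $1$ that $B<q^{p-1}$, contradicting the Cassels bound; combining this with explicit upper bounds for $p$ and $q$ reduces whatever remains to a finite computation.

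I expect the decisive obstacle to be exactly this last, ``plus'', step, and in particular the removal of the dependence on Vandiver's conjecture, which requires Mih{\u a}ilescu's delicate analysis of cyclotomic units; the Stickelberger bookkeeping in the third step is also technically heavy, but is essentially routine valuation arithmetic once the set-up is in place.
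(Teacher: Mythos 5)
This lemma is not proved in the paper at all: it is Mih\u{a}ilescu's theorem (Catalan's conjecture), quoted with a citation to \cite{Mih}, and the authors use it as a black box. So the honest comparison is with the literature, not with anything in the text. Your elementary reductions are correct and standard: the cases $a=1$ and $b=1$ are trivial, the reduction to prime exponents by replacing $a$ with $a^{x/\ell}$ is valid, $x=y=2$ dies on $(b-a)(b+a)=1$, the case $x=2$ with $y$ an odd prime is Lebesgue's theorem, and the case $y=2$ with $x$ an odd prime is Ko Chao's theorem, which is precisely where the solution $3^2-2^3=1$ appears. Up to that point you have a genuine (modulo the two cited classical theorems) proof of the easy half of the statement.

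The genuine gap is everything after that. From ``First, Cassels' relations'' onward you are no longer proving anything; you are giving an annotated table of contents for Mih\u{a}ilescu's paper. The Cassels relations are asserted, the construction of the ideal $\mathfrak b$ with $(\alpha)=\mathfrak b^{\,p}$ is asserted, the Stickelberger annihilation and the derivation of the double Wieferich congruences $q^{p-1}\equiv 1\pmod{p^2}$ and $p^{q-1}\equiv 1\pmod{q^2}$ are asserted, and the decisive ``plus'' step --- the analysis of primary cyclotomic units and the Thaine-type theorem that removes any appeal to Vandiver's conjecture --- is explicitly deferred, as you yourself acknowledge in your closing sentence. A proof sketch that names its own missing core is not a proof; each of those steps occupies many pages of delicate argument in the original. (A minor further inaccuracy: the final published proof is purely algebraic and needs no ``finite computation'' or explicit bounds on $p$ and $q$; that was a feature of earlier partial approaches via linear forms in logarithms.) For the purposes of this paper the correct move is exactly what the authors do: cite \cite{Mih} and move on.
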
  

We also need the following classical result due to Siegel \cite{Siegel}. 

\begin{lemma}[\cite{Siegel}]   \label{lem:Siegel}
Let $f(x)$ be a polynomial in $\Z[x]$. If $f$ has at least three simple roots, then the equation $y^2=f(x)$ has only 
finitely many integer solutions $(x,y)$. 
\end{lemma}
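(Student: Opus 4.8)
The plan is to reduce the general equation $y^2=f(x)$ to the model case $Y^2=c\,g(X)$ with $c\in\Z$ and $g\in\Z[X]$ squarefree of degree at least $3$, and then to run a classical descent that turns the problem into finitely many $S$-unit equations. For the reduction, factor $f$ over $\Q$ and regroup the irreducible factors according to the parity of their multiplicities; by Gauss's lemma one obtains $f=c\,g\,h^2$ with $c\in\Z$ and $g,h\in\Z[X]$, where $g$ is squarefree and its roots are exactly the roots of $f$ of odd multiplicity. In particular each simple root of $f$ is a root of $g$, so $\deg g\ge 3$. For an integer solution $(x,y)$ with $h(x)\ne 0$ one has $h(x)\mid y$; writing $y=h(x)z$ gives $z^2=c\,g(x)$, and since $h$ has only finitely many roots it suffices to bound the integer solutions of $z^2=c\,g(x)$.

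Let $\alpha_1,\dots,\alpha_d$ ($d=\deg g\ge 3$) be the roots of $g$, fix a number field $K$ containing them, and choose a finite set $S$ of places of $K$ that contains the archimedean ones, those dividing $c$ and the leading coefficient of $g$, and every $\mathfrak p$ with $v_{\mathfrak p}(\alpha_i-\alpha_j)>0$ for some $i\ne j$; enlarge $S$ so that $\OO_{K,S}$ is a principal ideal domain. A standard coprimality argument then shows that in $\OO_{K,S}$ the elements $x-\alpha_i$ are pairwise coprime up to the primes of $S$, so $x-\alpha_i=\delta_i\gamma_i^2$ with $\gamma_i\in\OO_{K,S}$ and $\delta_i$ ranging over a fixed finite set of representatives of $\OO_{K,S}^{*}/(\OO_{K,S}^{*})^2$, which is finite by the $S$-unit theorem. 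Now fix three indices, say $1,2,3$, pass to $L=K(\sqrt{\delta_1},\sqrt{\delta_2},\sqrt{\delta_3})$, and set $\theta_i=\sqrt{\delta_i}\,\gamma_i$, so that $\theta_i^2=x-\alpha_i$ and $(\theta_i-\theta_j)(\theta_i+\theta_j)=\alpha_j-\alpha_i$ is a fixed nonzero element of $L$ for each pair. Enlarging $S$ once more (now in $L$), each factor $\theta_i\pm\theta_j$ divides a fixed constant, hence $\theta_i\pm\theta_j=\varepsilon_{ij}^{\pm}\lambda_{ij}^{\pm}$ with $\varepsilon_{ij}^{\pm}\in\OO_{L,S}^{*}$ and $\lambda_{ij}^{\pm}$ in a fixed finite set. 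The linear identities $(\theta_1-\theta_2)+(\theta_2-\theta_3)=\theta_1-\theta_3$ and $(\theta_1+\theta_2)-(\theta_2-\theta_3)=\theta_1+\theta_3$ then give, after dividing through by one term, two equations of the shape $a\,u+b\,v=1$ with $a,b$ fixed and $u,v\in\OO_{L,S}^{*}$. By the finiteness theorem for $S$-unit equations each has finitely many solutions; combining the resulting finitely many possibilities for the ratios $\varepsilon_{13}^{+}/\varepsilon_{13}^{-}$ with the product relation $\varepsilon_{13}^{+}\varepsilon_{13}^{-}\lambda_{13}^{+}\lambda_{13}^{-}=\alpha_3-\alpha_1$ forces $(\varepsilon_{13}^{+})^2$, hence $\varepsilon_{13}^{+}$, into a finite set. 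Then $2\theta_1=(\theta_1-\theta_3)+(\theta_1+\theta_3)$ lies in a finite set, so $x=\theta_1^2+\alpha_1$, and with it $y$, lies in a finite set.

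The only genuinely deep ingredient is the finiteness of the solutions of the $S$-unit equation $u+v=1$ (equivalently, Siegel's theorem on integral points on $\mathbb{P}^1$ minus three points); everything else is soft algebraic number theory — finiteness of class groups and of $S$-units modulo squares, and the divisibility bookkeeping. That input may be supplied ineffectively via the Thue–Siegel–Roth theorem or effectively via Baker's lower bounds for linear forms in logarithms. Alternatively, one may bypass the descent entirely: the model curve $Y^2=c\,g(X)$ has a smooth projective model of genus $\lfloor (d-1)/2\rfloor\ge 1$, so the conclusion follows immediately from Siegel's theorem on integral points on curves of positive genus, which is the route taken by the reference \cite{Siegel}. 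In either presentation the sole obstacle is that one irreducibly Diophantine finiteness statement; the rest of the argument is formal.
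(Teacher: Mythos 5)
The paper itself gives no proof of this lemma: it is quoted as a classical result with a bare citation to Siegel's 1926 paper, so there is no internal argument to compare yours against. Your write-up is a correct and essentially complete sketch of the standard proof. The reduction $f=c\,g\,h^{2}$ with $g\in\Z[x]$ squarefree of degree at least $3$ and the substitution $y=h(x)z$ are fine (one should discard the finitely many $x$ with $h(x)g(x)=0$ or with $x$ equal to some $\alpha_i$, but that is trivial), and the descent $x-\alpha_i=\delta_i\gamma_i^{2}$, the passage to $L=K(\sqrt{\delta_1},\sqrt{\delta_2},\sqrt{\delta_3})$, and the two Siegel identities are all standard and correct. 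The only compressed step is the extraction of $\varepsilon_{13}^{+}/\varepsilon_{13}^{-}$: the two unit equations a priori put the ratios $(\theta_2-\theta_3)/(\theta_1-\theta_3)$ and $(\theta_2-\theta_3)/(\theta_1+\theta_3)$ into finite sets, and one must take their quotient (legitimate, since $(\theta_2-\theta_3)(\theta_2+\theta_3)=\alpha_3-\alpha_2\neq 0$) to place $(\theta_1+\theta_3)/(\theta_1-\theta_3)$ in a finite set before multiplying by $(\theta_1+\theta_3)(\theta_1-\theta_3)=\alpha_3-\alpha_1$; once that intermediate line is written out, the conclusion that $2\theta_1=(\theta_1-\theta_3)+(\theta_1+\theta_3)$, hence $x=\theta_1^{2}+\alpha_1$ and then $y$, lies in a finite set follows exactly as you say. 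You also correctly isolate the single nonelementary input, the finiteness of solutions of $au+bv=1$ in $S$-units; relying on that theorem (or, as you note, directly on Siegel's theorem for the positive-genus model of $z^{2}=c\,g(x)$) is precisely the role the citation to Siegel plays in the paper, so your proof is a faithful modernization of the cited argument rather than a genuinely different route.
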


\section{Multiplicative independence}  \label{sec:ind}

In the following theorem, we confirm the multiplicative independence among the translations of algebraic numbers. 
Actually, we can do more than it was claimed at the beginning. 

\begin{theorem} \label{thm:mult ind1}
Let $\alpha_1, \ldots, \alpha_n$ be pairwise distinct algebraic numbers, and let
$d=[\Q(\alpha_1,\dots,\alpha_n):\Q]$. Then, there is a positive constant $C=C(n,\alpha_1,\ldots, \alpha_n)$ 
such that for any algebraic integer $t$ of degree at most $\overline{|t|}^{1/(nd+1)}$ and with $\overline{|t|}\ge C$, 
the following $n$ algebraic numbers $\alpha_1+t, \ldots, \alpha_n+t$ are multiplicatively independent. 
\end{theorem}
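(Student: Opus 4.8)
The plan is to argue by contradiction: suppose that for infinitely many algebraic integers $t$ (with $\overline{|t|}$ growing and $\deg t$ small relative to $\overline{|t|}$) the numbers $\alpha_1+t,\dots,\alpha_n+t$ are multiplicatively dependent, and derive a growth contradiction. First I would set up the number field $K_t=\Q(\alpha_1,\dots,\alpha_n,t)$, whose degree $D_t$ is at most $d\cdot\deg t\le d\,\overline{|t|}^{1/(nd+1)}$. Each $\alpha_i+t$ is an algebraic integer, so its height is bounded by its house, and $\overline{|\alpha_i+t|}\le \overline{|\alpha_i|}+\overline{|t|}\le 2\overline{|t|}$ once $\overline{|t|}$ is large; hence $\wH(\alpha_i+t)\le 2\overline{|t|}$ and we may take $H=2\overline{|t|}$ in Lemma~\ref{lem:exponent}. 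That lemma then produces a nontrivial integer relation $(\alpha_1+t)^{k_1}\cdots(\alpha_n+t)^{k_n}=1$ with
$$
\max_i|k_i|\le c_1 D_t^{\,n}(\log(D_t+1))^{3(n-1)}(\log H)^{n-1}\le \overline{|t|}^{\,n/(nd+1)}\cdot(\log\overline{|t|})^{O(1)},
$$
which is $o(\overline{|t|})$ since $n/(nd+1)<1/d\le 1$; this is the decisive gain from the hypothesis bounding $\deg t$.

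The second step is to extract a contradiction from such a relation with small exponents. Group the factors by the sign of $k_i$ and write the relation as $\prod_{k_i>0}(\alpha_i+t)^{k_i}=\prod_{k_i<0}(\alpha_i+t)^{-k_i}$, i.e. $P(t)=Q(t)$ where $P,Q\in\C[x]$ are products of powers of the linear forms $x+\alpha_i$; both sides are polynomials in $t$ of the same degree $\delta=\sum_{k_i>0}k_i=\sum_{k_i<0}(-k_i)=:\delta$ (equality of degrees forces this). Since the $\alpha_i$ are pairwise distinct, $P$ and $Q$ are coprime unless they are identically equal; but $P\equiv Q$ would force the multisets of linear factors to coincide, which is impossible when $(k_1,\dots,k_n)\ne 0$. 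Hence $P-Q$ is a nonzero polynomial of degree $<\delta$, and $t$ is a root of it. Now I would apply a suitable conjugate/archimedean estimate: choosing a conjugate of $t$ of largest modulus and using that the coefficients of $P-Q$ are bounded in terms of the $\alpha_i$ and $\max_i|k_i|$, a root of $P-Q$ has modulus bounded by a quantity of the shape $C'(n,\alpha_1,\dots,\alpha_n)\cdot(\max_i|k_i|)$ times a fixed constant — in any case polynomially in $\max_i|k_i|$ and hence $o(\overline{|t|})$, contradicting $\overline{|t|}\ge C$ for $C$ large. (One must be slightly careful: the clean statement is that $\overline{|t|}$ is bounded in terms of $n,\alpha_1,\dots,\alpha_n$ and $\max_i|k_i|$; combined with $\max_i|k_i|=o(\overline{|t|})$ this yields $\overline{|t|}=o(\overline{|t|})$, absurd.)

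The main obstacle I anticipate is the degree bookkeeping in Step~2, namely ruling out $P\equiv Q$ and controlling the size of the roots of $P-Q$ uniformly: one needs that the nonvanishing of $P-Q$ is genuinely forced by $(k_1,\dots,k_n)\ne 0$ (here distinctness of the $\alpha_i$ is essential) and that the archimedean bound on $\overline{|t|}$ depends only on $n$ and the $\alpha_i$ and linearly (or at least polynomially) on $\max_i|k_i|$, so that it is beaten by the $o(\overline{|t|})$ bound from Step~1. A secondary technical point is the interplay of all conjugates of $t$: the relation $(\alpha_1+t)^{k_1}\cdots(\alpha_n+t)^{k_n}=1$ holds for the chosen embedding, and one should apply the estimate at the embedding realizing $\overline{|t|}=|t^{(j)}|$, noting $\overline{|\alpha_i+t|}$ is still controlled there. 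Once these are handled, choosing
$$
C=C(n,\alpha_1,\dots,\alpha_n)
$$
large enough to absorb all the implied constants and the logarithmic factors completes the argument. I would also remark, as the authors do, that this is a special case of \cite[Theorem 1']{BMZ}, so an alternative route is simply to cite that result and check the degree condition matches.
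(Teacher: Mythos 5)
Your Step 1 is essentially the paper's: use the degree hypothesis to bound $D\le d\,\overline{|t|}^{1/(nd+1)}$, bound the heights of the $\alpha_i+t$, and invoke Lemma~\ref{lem:exponent} to get $\max_i|k_i|\le \overline{|t|}^{\,n/(nd+1)}(\log\overline{|t|})^{O(1)}=o(\overline{|t|})$, exactly as in \eqref{antrokas}. (One slip there: $\alpha_i+t$ need not be an algebraic integer, since the $\alpha_i$ are arbitrary algebraic numbers; you should instead use $\wH(\alpha_i+t)\le 2\wH(\alpha_i)\wH(t)\le 2\wH(\alpha_i)\overline{|t|}$, which gives the same $\log H\ll\log\overline{|t|}$.)

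The genuine gap is in Step 2, in the claim that every root of $R=P-Q$ has modulus bounded polynomially (or linearly) in $\max_i|k_i|$ ``because the coefficients of $P-Q$ are bounded in terms of the $\alpha_i$ and $\max_i|k_i|$.'' They are not bounded in that way: with $\delta=\sum_{k_i>0}k_i$, the middle coefficients of $P$ and $Q$ are of size up to $\binom{\delta}{j}(\max_i|\alpha_i|)^j$, i.e.\ exponential in $\delta\asymp\max_i|k_i|$, while the leading coefficient of $R$ is (via Newton's identities) governed by the first non-vanishing power sum $\sum_i k_i\alpha_i^{\nu}$, which Liouville's inequality only bounds \emph{below} by a negative power of $\max_i|k_i|$. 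Any generic root bound (Cauchy, Lagrange, etc.) therefore yields only $\overline{|t|}\le e^{O(\max_i|k_i|)}$, and since $\max_i|k_i|$ may be as large as $\overline{|t|}^{\,n/(nd+1)}$, this is \emph{not} contradicted by Step 1. To rescue the argument you must exploit the multiplicative structure rather than treat $R$ as a generic polynomial: divide by $t^{\delta}$ to rewrite the relation as $\prod_i(1+\alpha_i/t)^{k_i}=1$ (this is also where $\sum_i k_i=0$ is actually proved, not just asserted via ``equality of degrees''), take the principal logarithm, and Taylor-expand as in \eqref{eq:Taylor}; then the first non-vanishing $\sum_i k_i\alpha_i^{j}$ is simultaneously $O(\max_i|k_i|/|t|)$ from the expansion and bounded below by Liouville's inequality, and the Vandermonde determinant (here the pairwise distinctness of the $\alpha_i$ enters) guarantees some $j\le n$ works. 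That is precisely the paper's proof, so your polynomial reformulation does not avoid it — it only repackages the same quantities $\sum_i k_i\alpha_i^{j}$ as coefficients of $R$. Your concluding suggestion to ``simply cite \cite{BMZ}'' also does not give the stated theorem: bounded height of $t$ does not bound $\overline{|t|}$ when $\deg t$ is allowed to grow like $\overline{|t|}^{1/(nd+1)}$.
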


We remark that the exponent $1/(nd+1)$ for $\overline{|t|}$ here is not optimal and is chosen for the sake of simplicity. 

\begin{proof}
The result is trivial for $n=1$. Assume that $n \ge 2$. 
Without loss of generality, we can further assume that 
\begin{equation}  \label{eq:hou}
|t| = \overline{|t|}.
\end{equation}
Indeed, if $|t| \ne \overline{|t|}$, then there is a Galois isomorphism $\sigma$ of the Galois closure of $\Q(\alpha_1,\dots,\alpha_n,t)$ over $\Q$ such that $|\sigma(t)| = \overline{|t|}$.
Then, it suffices to verify the multiplicative independence of the algebraic numbers $\sigma(\alpha_1)+\sigma(t), \ldots, \sigma(\alpha_n)+\sigma(t)$. 

Take $|t|$ large enough. Then, we can assume that 
$\alpha_i+t \ne 0$ and, moreover, 
$$
\big| |1+\alpha_i/t| - 1 \big| < \varepsilon, \quad i=1,2,\ldots, n,
$$
for a sufficiently small $\varepsilon>0$. 
For a complex number $z$, let $\arg(z) \in (-\pi, \pi]$ be the principal argument of $z$. 
Note that for $\varepsilon \le 1/2$ and each $i=1,2,\ldots, n$, we have 
$$
|\sin(\arg(1 + \alpha_i / t))| =\frac{|\sin(\arg (\alpha_i/t))|\cdot |\alpha_i/t|}
{ |1+\alpha_i/t|} \le 2 |\alpha_i| / |t|. 
$$
Thus, using the fact that $|x| \le 2|\sin x| $ for any $x
\in [-\pi/2,\pi/2]$, 
we can further assume that the principal arguments satisfy
\begin{equation}\label{vienas}
|\arg(1 + \alpha_i / t)| \le 4|\alpha_i|/|t|, \quad i=1,2,\ldots, n.
\end{equation}
Besides, by the basic properties of the Weil height (see, e. g., 
\cite{Waldschmidt2000}) and \eqref{eq:hou}, we have 
\begin{equation}  \label{eq:Halt}
\wH(\alpha_i+t) \le 2\wH(t)\wH(\alpha_i) \le 2|t|\wH(\alpha_i),  \quad i=1,2,\ldots, n.
\end{equation}
Here, $\wH(t) \le |t|$, since $t$ is an algebraic integer and $|t| = \overline{|t|}$, by \eqref{eq:hou}. 

For a contradiction, assume that $\alpha_1+t, \ldots, \alpha_n+t$ are multiplicatively dependent,
that is, there is a non-zero vector $(k_1,\ldots,k_n) \in \Z^n$ such that 
\begin{equation} \label{eq:mult t1}
(\alpha_1+t)^{k_1} \cdots (\alpha_n+t)^{k_n} = 1. 
\end{equation}
Set $$D=[\Q(\alpha_1,\dots,\alpha_n,t):\Q].$$
Then, by the degree assumption on $t$, we find that
$$D \leq [\Q(t):\Q]d \leq d|t|^{1/(nd+1)}.$$
By Lemma~\ref{lem:exponent} (see \eqref{eq:exp1}) and \eqref{eq:Halt}, we can further assume that the nonzero integers in \eqref{eq:mult t1} can be chosen such that
\begin{equation}\label{antrokas}
\max_{1\leq i\leq n}|k_i| \le c_3 |t|^{n/(nd+1)} (\log |t|)^{4(n-1)},
\end{equation}
where $c_3$ depends only on $n,\alpha_1,\dots,\alpha_n$. 
(Note that $d$ also depends on $\alpha_1,\dots,\alpha_n$.)

Observe first that if in \eqref{eq:mult t1} we have $S=\sum_{i=1}^{n} k_i \ne 0$, then, since each $|\alpha_i + t|$ is close to $|t|$, the absolute value of the left-hand side of \eqref{eq:mult t1} 
is either very large (if $S>0$) or very small 
(if $S<0$)
provided that $|t|$ is large enough, which contradicts with \eqref{eq:mult t1}.  
Indeed, by \eqref{eq:mult t1}, we obtain 
$$
|t|^S = \prod_{i=1}^{n} |1+\alpha_i/t|^{-k_i}. 
$$
Suppose that $S \ne 0$. Replacing $(k_1,\ldots,k_n)$ by $(-k_1,\ldots,-k_n)$ if necessary, we can assume that $S>0$, and hence $S\ge 1$. 
Then, using $|1+\alpha_i/t|^{-k_i} \le |1+|\alpha_i|/|t||^{2|k_i|}$ for $|t|$ large
enough and $|t| \le |t|^S$,  we deduce that 
$$
|t| \le \prod_{i=1}^{n} (1+ |\alpha_i|/|t|)^{2|k_i|} \le \exp \Big(\frac{2}{|t|}\sum_{i=1}^{n}|k_i||\alpha_i|\Big). 
$$
By taking logarithms of both sides and using
\eqref{antrokas}, we get the inequality
$$|t|\log |t| \le  c_4 |t|^{n/(nd+1)} (\log |t|)^{4(n-1)}$$ 
for some constant $c_4$ depending only on $n,\alpha_1,\ldots,\alpha_n$. 
However, this inequality cannot hold for $|t|$ large enough, because $n/(nd+1)<1$.
Thus, we must have $S= 0$. 

Now, by \eqref{eq:mult t1} combined with $\sum_{i=1}^{n} k_i = 0$, it follows that
\begin{equation} \label{eq:mult t2}
(1+\alpha_1/t)^{k_1} \cdots (1+\alpha_n/t)^{k_n} = 1. 
\end{equation}
With our assumptions, by \eqref{vienas}, we further deduce that
$$
\sum_{i=1}^{n} |k_i \arg(1+\alpha_i/t)| \le \sum_{i=1}^{n} \frac{4|k_i \alpha_i|}{|t|},
$$
which, by \eqref{antrokas}, is clearly less than  $\pi$ when $|t|$ is large enough. 
So, by taking logarithms of both sides of \eqref{eq:mult t2}, we obtain 
$$
\sum_{i=1}^{n} k_i \log (1+ \alpha_i/t) = 0, 
$$
where ``$\log$" means the principal branch of the complex logarithm. 
Then, using the Taylor expansion we deduce that
\begin{equation} \label{eq:Taylor}
\frac{1}{t} \sum_{i=1}^{n} k_i \alpha_i  - \frac{1}{2t^2} \sum_{i=1}^{n} k_i \alpha_i^2  + \frac{1}{3t^3} \sum_{i=1}^{n} k_i \alpha_i^3 - \cdots = 0.
\end{equation} 

Multiplying both sides of \eqref{eq:Taylor} by $t$ and using the bound \eqref{antrokas}, we get 
\begin{equation} \label{eq:sum1}
\big|\sum_{i=1}^{n} k_i \alpha_i \big| \le c_5 |t|^{(n-nd-1)/(nd+1)} (\log |t|)^{4(n-1)}, 
\end{equation}
where $c_5$ is a constant depending only on $n$ and $\alpha_1,\ldots, \alpha_n$. 

Assume that $\sum_{i=1}^{n} k_i \alpha_i \ne 0$. Then, by Liouville's inequality (see \cite[Proposition 3.14]{Waldschmidt2000}) and the upper bound \eqref{antrokas}, one can easily get that
\begin{equation} \label{eq:sum2}
\big| \sum_{i=1}^{n} k_i \alpha_i \big| \ge c_6(|t|^{n/(nd+1)} (\log |t|)^{4(n-1)})^{1-d}, 
\end{equation}
where $c_6$ is a constant depending only on $n$ and $\alpha_1, \ldots, \alpha_n$. 
Clearly, in view of $nd-n+1>n(d-1)$ the two estimates \eqref{eq:sum1} and \eqref{eq:sum2} lead to a contradiction provided that $|t|$ is large enough. 
Hence, we must have 
$$
\sum_{i=1}^{n} k_i \alpha_i = 0. 
$$

Applying the same argument to \eqref{eq:Taylor}, step by step, we obtain 
\begin{align*}
  \sum_{i=1}^{n} k_i \alpha_i ^2= 0, 
\quad  \sum_{i=1}^{n} k_i \alpha_i^3 = 0,  
\quad \ldots,
\quad \sum_{i=1}^{n} k_i \alpha_i^n = 0. 
\end{align*}
This is a system of $n$ linear equations with unknowns $k_1,\ldots, k_n$.  Notice that its coefficient matrix is the Vandermonde matrix 
with non-zero determinant, since $\alpha_i \ne \alpha_j$ for $1\le i \ne j \le n$. 
So, we must have 
$$
k_1 = \ldots = k_n =0, 
$$ 
which contradicts to the assumption that $(k_1,\ldots, k_n)$ is a 
non-zero vector. 
This completes the proof of the theorem. 
\end{proof}

Following the same arguments as in the proof of Theorem~\ref{thm:mult ind1} and using the inequality \eqref{eq:exp2} of Lemma~\ref{lem:exponent} (instead of \eqref{eq:exp1}) which yields $$\max_{1\leq i\leq n}|k_i| \le c_7(\log |t|)^{n-1}$$ instead of \eqref{antrokas},
we obtain the following:

\begin{theorem} \label{thm:mult ind2}
Given $n \geq 2$ pairwise distinct totally real algebraic numbers $\alpha_1, \ldots, \alpha_n$, there is a positive constant $C=C(n,\alpha_1,\ldots, \alpha_n)$ 
such that for any totally real algebraic integer $t$ with $\overline{|t|}\ge C$, 
the following $n$ algebraic numbers $\alpha_1+t, \ldots, \alpha_n+t$ are multiplicatively independent. 
\end{theorem}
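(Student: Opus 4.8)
The plan is to rerun the proof of Theorem~\ref{thm:mult ind1} essentially verbatim, feeding into it the totally real exponent bound \eqref{eq:exp2} of Lemma~\ref{lem:exponent} in place of \eqref{eq:exp1}. The decisive feature of \eqref{eq:exp2} is that it carries no factor of the degree $D=[\Q(\alpha_1,\dots,\alpha_n,t):\Q]$; this is exactly what will make the degree hypothesis on $t$ present in Theorem~\ref{thm:mult ind1} unnecessary.

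First I would dispose of $n=1$ and assume $n\ge 2$. As before, choose a Galois automorphism $\sigma$ of the Galois closure of $\Q(\alpha_1,\dots,\alpha_n,t)$ with $|\sigma(t)|=\overline{|t|}$; replacing $t$ by $\sigma(t)$ and each $\alpha_i$ by $\sigma(\alpha_i)$, one reduces to the case $|t|=\overline{|t|}$ as in \eqref{eq:hou}. The new numbers $\sigma(\alpha_i)$ are again pairwise distinct and totally real and lie in the fixed finite pool of conjugates of $\alpha_1,\dots,\alpha_n$, so it suffices to take $C$ to be the largest of the constants produced for these finitely many configurations. With $t$ and the $\alpha_i$ now real and $|t|$ large, each $1+\alpha_i/t$ is a positive real number close to $1$; in particular $\alpha_i+t\ne 0$, the height bound $\wH(\alpha_i+t)\le 2|t|\wH(\alpha_i)$ of \eqref{eq:Halt} holds with $\wH(t)\le|t|$, and the argument estimate \eqref{vienas} is not needed, since every logarithm below is a genuine real logarithm.

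Assume, for a contradiction, that $\prod_{i=1}^n(\alpha_i+t)^{k_i}=1$ with $(k_1,\dots,k_n)\in\Z^n$ non-zero. Since $K=\Q(\alpha_1,\dots,\alpha_n,t)$ is totally real, Lemma~\ref{lem:exponent} together with \eqref{eq:Halt} lets us choose the $k_i$ with $\max_i|k_i|\le c_7(\log|t|)^{n-1}$ for some $c_7=c_7(n,\alpha_1,\dots,\alpha_n)$, which now plays the role of \eqref{antrokas}. From here the three steps of the proof of Theorem~\ref{thm:mult ind1} go through with room to spare. Writing $S:=\sum_i k_i$, taking absolute values gives $|t|^S=\prod_i|1+\alpha_i/t|^{-k_i}$, and if $S\ne 0$ then, after possibly negating all $k_i$, $|t|\le|t|^S\le\exp(|t|^{-1}\sum_i|k_i\alpha_i|)$ forces $|t|\log|t|\le c(\log|t|)^{n-1}$, impossible for $|t|$ large; hence $S=0$ and therefore $\prod_i(1+\alpha_i/t)^{k_i}=1$. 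Taking real logarithms and Taylor-expanding yields \eqref{eq:Taylor}; multiplying through by $t$ bounds $|\sum_i k_i\alpha_i|$ by $c(\log|t|)^{n-1}/|t|$, whereas if $\sum_i k_i\alpha_i\ne 0$ then Liouville's inequality applied to this non-zero element of the \emph{fixed} field $\Q(\alpha_1,\dots,\alpha_n)$ --- of degree at most $d$ and of height at most $c(\max_i|k_i|)^n\le c(\log|t|)^{n(n-1)}$ --- gives $|\sum_i k_i\alpha_i|\ge c(\log|t|)^{-dn(n-1)}$, contradicting the upper bound once $|t|$ is large. Hence $\sum_i k_i\alpha_i=0$, and iterating the same estimate on the successive terms of \eqref{eq:Taylor} gives $\sum_i k_i\alpha_i^j=0$ for $j=1,\dots,n$; together with $S=0$ and the distinctness of $\alpha_1,\dots,\alpha_n$, a Vandermonde argument then forces $k_1=\dots=k_n=0$, the desired contradiction.

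I do not expect any genuinely hard step here. The things to watch are all bookkeeping: one must keep every constant $c,c_7,\dots$ dependent on $n$ and $\alpha_1,\dots,\alpha_n$ only, and one must notice the two points that make the degree hypothesis evaporate --- namely that \eqref{eq:exp2} is degree-free, and that each $\sum_i k_i\alpha_i^j$ lives in $\Q(\alpha_1,\dots,\alpha_n)$, whose degree $d$ is fixed independently of $t$, so that both the exponent bound and the Liouville bound stay polynomial in $\log|t|$ however large $\deg t$ may be, and the restriction $\deg t\le\overline{|t|}^{1/(nd+1)}$ of Theorem~\ref{thm:mult ind1} is simply no longer required.
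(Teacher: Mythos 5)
Your proposal is correct and follows exactly the route the paper intends: the paper's own proof of this theorem is a one-line remark to rerun the argument of Theorem~\ref{thm:mult ind1} with the degree-free bound \eqref{eq:exp2} in place of \eqref{eq:exp1}, which is precisely what you do, including the two key observations that the exponent bound no longer involves $D$ and that the Liouville step takes place in the fixed field $\Q(\alpha_1,\dots,\alpha_n)$.
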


Theorem~\ref{thm:mult ind1} implies the following corollary.
(It also follows from \cite[Theorem 1']{BMZ}, 
by considering the line parameterized by $x-\alpha_1,\ldots,x-\alpha_n$ as $x$ varies.)  

\begin{corollary}
Given a positive integer $m$ and $n \geq 2$ pairwise distinct algebraic numbers $\alpha_1, \ldots, \alpha_n$, there is a positive constant $C=C(m,n,\alpha_1,\ldots, \alpha_n)$ 
such that for any algebraic integer $t$ of degree at most 
$m$ and with $\overline{|t|}\ge C$, 
the following $n$ algebraic numbers $\alpha_1+t, \ldots, \alpha_n+t$ are multiplicatively independent. 
\end{corollary}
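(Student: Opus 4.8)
The plan is to deduce this directly from Theorem~\ref{thm:mult ind1} by absorbing the fixed degree bound $m$ into the size condition imposed on $t$. Write $d=[\Q(\alpha_1,\dots,\alpha_n):\Q]$, and let $C_0=C_0(n,\alpha_1,\dots,\alpha_n)$ be the constant supplied by Theorem~\ref{thm:mult ind1}. I would then set
$$
C=C(m,n,\alpha_1,\dots,\alpha_n)=\max\{\,C_0,\ m^{nd+1}\,\}.
$$
Note that $C$ depends only on $m$, $n$ and $\alpha_1,\dots,\alpha_n$, since $d$ is determined by the $\alpha_i$.

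Now let $t$ be any algebraic integer with $\deg t\le m$ and $\overline{|t|}\ge C$. Because $\overline{|t|}\ge m^{nd+1}$, we get $\overline{|t|}^{1/(nd+1)}\ge m\ge\deg t$, so $t$ meets the degree hypothesis ``$\deg t\le\overline{|t|}^{1/(nd+1)}$'' of Theorem~\ref{thm:mult ind1}; and because $\overline{|t|}\ge C_0$, it meets the size hypothesis as well. Hence Theorem~\ref{thm:mult ind1} applies to this $t$, and $\alpha_1+t,\dots,\alpha_n+t$ are multiplicatively independent, as claimed.

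There is essentially no obstacle here: the corollary is just the ``bounded degree'' specialization of Theorem~\ref{thm:mult ind1}, the only thing to observe being that a fixed degree bound $m$ is eventually dominated by the growing quantity $\overline{|t|}^{1/(nd+1)}$. As an alternative route, one could instead invoke \cite[Theorem~1']{BMZ} applied to the line in $\mathbb{G}_m^n$ parameterized by $(x-\alpha_1,\dots,x-\alpha_n)$ as $x$ varies, which yields the statement (and more) directly; but passing through Theorem~\ref{thm:mult ind1} keeps the argument short and self-contained.
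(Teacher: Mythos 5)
Your argument is correct and is exactly the route the paper intends: the corollary is stated as an immediate consequence of Theorem~\ref{thm:mult ind1}, and your choice $C=\max\{C_0,\,m^{nd+1}\}$ supplies the one small verification (that a fixed degree bound $m$ is eventually dominated by $\overline{|t|}^{1/(nd+1)}$) that the paper leaves implicit. Your remark about the alternative deduction from \cite[Theorem 1']{BMZ} via the line parameterized by $x-\alpha_1,\ldots,x-\alpha_n$ also matches the paper's own parenthetical comment.
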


In particular, we have: 

\begin{corollary}\label{darkart}
Given $n$ pairwise distinct algebraic num\-bers $\alpha_1, \ldots, \alpha_n$, there are only finitely many integers $t \in \Z$ for which the translated numbers
$\alpha_1+t, \ldots, \alpha_n+t$ are multiplicatively dependent. 
\end{corollary}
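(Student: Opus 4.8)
The plan is to obtain this as an immediate specialization of Theorem~\ref{thm:mult ind1} to rational integer translations. Write $d=[\Q(\alpha_1,\dots,\alpha_n):\Q]$ and let $C=C(n,\alpha_1,\dots,\alpha_n)$ be the constant furnished by that theorem. The key observation is that every $t\in\Z$ is an algebraic integer of degree $1$ with $\overline{|t|}=|t|$, so the degree hypothesis of Theorem~\ref{thm:mult ind1}, namely $\deg t\le \overline{|t|}^{1/(nd+1)}$, holds automatically as soon as $|t|\ge 1$.

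First I would fix an integer $t$ with $|t|\ge \max\{C,1\}$. Then both hypotheses of Theorem~\ref{thm:mult ind1} are satisfied: $\overline{|t|}=|t|\ge C$ and $\deg t=1\le |t|^{1/(nd+1)}$. Hence $\alpha_1+t,\dots,\alpha_n+t$ are multiplicatively independent; in particular they are all non-zero and no relation of the form \eqref{eq:MultDep} holds among them. Consequently, if $\alpha_1+t,\dots,\alpha_n+t$ are multiplicatively dependent, then necessarily $|t|<\max\{C,1\}$, and there are only finitely many such integers $t$. Alternatively, one may simply invoke the preceding corollary with $m=1$.

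There is essentially no obstacle here: all of the difficulty is already contained in Theorem~\ref{thm:mult ind1}. The only minor point worth recording is that integer translations $t$ with $\alpha_i+t=0$ for some $i$ — which by the convention adopted in Section~\ref{sec:int} are neither multiplicatively dependent nor independent — can occur for at most $n$ values of $t$ (and only when some $\alpha_i\in\Z$), so they do not affect the finiteness conclusion.
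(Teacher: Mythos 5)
Your proposal is correct and is essentially the paper's own route: the paper derives this corollary directly from Theorem~\ref{thm:mult ind1} (via the preceding corollary with $m=1$), exactly as you do, since every $t\in\Z$ has degree $1\le|t|^{1/(nd+1)}$ and $\overline{|t|}=|t|$. Your added remark about the finitely many $t$ with some $\alpha_i+t=0$ is a harmless and accurate clarification.
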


On the other hand, for a fixed integer $t\in \Z$, there are infinitely many vectors $(\alpha_1,\ldots,\alpha_n) \in \Z^n$ such that 
$(\alpha_1+t,\ldots,\alpha_n+t)$ is multiplicatively independent. For example, we can choose $\alpha_i=p_i-t$ for each $i$, 
where $p_1,\ldots,p_n$ are pairwise distinct rational primes.

\section{Sets of multiplicatively dependent vectors}  \label{sec:set}

\subsection{General setting}

In this section, we focus our attention on vectors in $\Z^2$ which are multiplicatively dependent. 
This turns out to be related to Pillai's equation, which is a quite typical kind of Diophantine equation and has been extensively studied; see, for example, \cite{Bennett,Bugeaud,Scott}. 

Starting from an integer vector $(a_1,\ldots, a_n) \in \Z^n$, we can get a set of multiplicatively dependent vectors 
in $\Z^n$ by adding $t \in \Z$ to each coordinate of the given vector. 
Corollary~\ref{darkart} implies that the set of such $t \in \Z$ is finite when the coordinates of the given vector are pairwise distinct, namely, $a_i \ne a_j$ for $i \ne j$.
Now, a natural question is to estimate the size of the set of possible $t \in  \Z$ for which the vector $(a_1+t,\dots,a_n+t)$  is multiplicatively dependent (and thus contains no zero coordinates by definition).  
In this paper, we only consider the simplest case $n=2$. 

Given a vector $(a,b) \in \Z^2$ with $a \ne b$, note that either $(1,b-a+1)$ or $(-1,b-a-1)$ is multiplicatively dependent  obtained from $(a,b)$ by translation as above, because $b-a+1$ and $b-a-1$ cannot be zero at the same time. 
So, the set of all possible $t \in \Z$ only depends on the difference $b-a$, which is also called the \textit{difference} of the set. 
For an integer $d\in \Z$, we denote by $\cM(d)$ the set of multiplicatively dependent vectors in $(a,b) \in \Z^2$, $ab \ne 0$, with difference $d=b-a$. 
Corollary~\ref{darkart} implies that each set $\cM(d), d\ne 0$, is a finite set.  
Let us put
$$
M(d) = | \cM(d) |,  \qquad d\in \Z, 
$$
where $|\cM(d)|$ is the cardinality of the set $\cM(d)$. 
One interesting direction is to study the size of $M(d)$, and especially whether the following maximum  
$$
\max_{d\ne 0} M(d) 
$$
is finite. 
(Clearly, the set $\cM(0)$ is infinite, because it consists of all pairs $(a,a)\in \Z^2, a\ne 0$.)  

Note that for any multiplicatively dependent vector $(a,b) \in \Z^2$, we certainly have $(a,b) \in \cM(b-a)$.  
So, the sets $\cM(d), d\in \Z$, form a disjoin union of all the multiplicatively dependent vectors in $\Z^2$. 
Since there is a one-to-one correspondence between the vectors in $\cM(d)$ and those in $\cM(-d)$ by 
the permutation of coordinates, we have 
$$
M(d) = M(-d)
$$
 for any $d \ne 0$. So, in the sequel we will always assume that $d \in \N$. 

Before going further, let us emphasize the following useful fact about multiplicatively dependent vectors in $\Z^2$. 
That is, if $(a,b)\in \Z^2$, $a \ne b$, is multiplicatively dependent, then there exists a positive integer $g$ and two non-negative integers $x,y$ such that $(a,b)=(\pm g^x, \pm g^y)$.

\subsection{Some explicit formulas} \label{sec:explicit}

We essentially relate $M(d)$ to counting integer solutions of two simple Pillai's equations in the lemma below. 

Throughout, for any given integer $d \geq 1$ we say that an integer solution $(g,x,y)$ of the equation 
\begin{equation} \label{eq:Pillai1} 
g^y+g^x = d, \qquad g\ge 2 \quad \text{and} \quad y>x \ge 1 
\end{equation} 
is \textit{primitive} 
if $g$ is not a perfect power. Let $N^{+}(d)$ be the number of primitive integer solutions of \eqref{eq:Pillai1}. Similarly, for any given integer $d \geq 1$ we say that an integer solution $(g,x,y)$ of the equation 
\begin{equation} \label{eq:Pillai2}
g^y-g^x = d, \qquad g\ge 2 \quad \text{and} \quad y>x \ge 1
\end{equation} 
is \textit{primitive} 
if $g$ is not a perfect power. Let $N^{-}(d)$ be the number of primitive integer solutions of \eqref{eq:Pillai2}.

\begin{lemma} \label{lem:size}
For any integer $d \ge 3$, we have
\begin{equation}\label{countt}
M(d) = 2N^{+}(d)+2N^{-}(d) + 4+\delta(d),  
\end{equation}
where $\delta(d)=1$ if $d$ is even, and  $\delta(d)=0$ if $d$ is odd. 
\end{lemma}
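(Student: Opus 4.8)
The plan is to enumerate, for a fixed difference $d \ge 3$, all multiplicatively dependent vectors $(a,b) \in \Z^2$ with $ab \ne 0$ and $b - a = d$, using the structural fact recalled just before the statement: such a vector must be of the form $(\pm g^x, \pm g^y)$ for a positive integer $g$ and non-negative integers $x, y$. I would first dispose of the degenerate possibilities $g = 1$, which forces $\{a,b\} \subseteq \{1,-1\}$ and hence $d \le 2$, contradicting $d \ge 3$; and $g = 0$, which is excluded since $ab \ne 0$. So we may assume $g \ge 2$. Next I would normalize: if $g = h^k$ is a perfect power, then $g^x = h^{kx}$, so every vector arising from a non-primitive base already arises from the primitive base $h$; thus it suffices to count vectors coming from bases $g$ that are not perfect powers, which is exactly why the definitions of $N^{+}(d)$ and $N^{-}(d)$ restrict to primitive solutions. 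Conversely, distinct primitive bases $g$ give genuinely distinct sets of powers (a standard fact: the multiplicative semigroup generated by a non-perfect-power $g$ determines $g$), so there is no double-counting across bases.

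With the base $g \ge 2$ (not a perfect power) fixed, I would split into cases according to the signs and according to whether one of the exponents is $0$. The sign patterns for $(a,b)$ are $(g^x, g^y)$, $(g^x, -g^y)$, $(-g^x, g^y)$, $(-g^x, -g^y)$. In the same-sign cases the equation $b - a = d > 0$ becomes $\pm(g^y - g^x) = d$, and since $d > 0$ and the two exponents cannot be equal (as $d \ne 0$ and $g \ge 2$), this reduces after possibly swapping to $g^y - g^x = d$ with $y > x \ge 0$. In the opposite-sign cases it becomes $g^y + g^x = d$ or $-(g^x + g^y) = d$ (the latter impossible since $d>0$), i.e. $g^y + g^x = d$ with $x, y \ge 0$. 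I then separate the sub-case where the smaller exponent is $0$ from the sub-case where both exponents are $\ge 1$: the latter sub-cases are precisely counted by $N^{+}(d)$ and $N^{-}(d)$ (and the symmetric sign-swapped companions give the factor $2$ in $2N^{+}(d)$ and $2N^{-}(d)$), while the sub-cases with a zero exponent must be handled by hand.

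The bookkeeping of the "zero exponent" boundary cases is where the additive constant $4 + \delta(d)$ comes from, and this is the step I expect to require the most care. When the smaller exponent is $0$ we are looking at $g^y - 1 = d$ (from same-sign, $a = \pm 1$), $g^y + 1 = d$ (from opposite signs, $a = \pm 1$), and also the genuinely boundary vectors where \emph{both} coordinates are $\pm 1$ handled already, plus vectors like $(\pm 1, \pm(d\mp 1))$ where the non-unit coordinate need not be a prime power at all — here one must remember that $(\pm 1, c)$ is multiplicatively dependent for \emph{every} nonzero integer $c$ (take the exponent of $\pm 1$ to be anything and of $c$ to be $0$, or rather use $(\pm1)^2 \cdot c^0 = 1$). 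So the vectors $(1, d+1)$, $(-1, d-1)$, $(1, -(d-1))$ wait — I must be careful: $(1, 1-d)$ and $(-1, -(d+1))$ etc. Concretely the four "universal" contributions are $(1, 1+d)$, $(-1, d-1)$, and their sign-flips $( -1, -(1+d))$... I would make a clean table of the eight sign/position patterns with one coordinate $\pm 1$, check which are distinct (some coincide when $d$ is small, but $d \ge 3$ keeps $1, d-1, d+1$ pairwise distinct and nonzero), arriving at exactly $4$ such vectors, plus one extra vector $(g^x,g^y) = (\pm 2^{?}, \ldots)$ — no: the extra $\delta(d)$ comes from the case where $d$ is even and one can write $d = 2^y - 2^x$ or similar with the \emph{same} small exponent, e.g. the vector $(d/2, -d/2)$ when... actually the parity term arises because when $d$ is even the vector $(-d/2 \cdot 2^0, d/2)$... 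I would pin this down by noting $(a,b)$ with $a = -b$ is multiplicatively dependent (as $a^1 b^1 \cdot(-1)^0$, i.e. $(-b)\cdot b = -b^2$, not $1$ — so instead $(a,b) = (-c, c)$ needs $c$ a power pattern) — the honest statement is $d$ even allows $a = -d/2$, $b = d/2$ which is $(-g^0 \cdot (d/2), g^0 \cdot (d/2))$, multiplicatively dependent because $(d/2)$ and $-(d/2)$ differ by a sign unit only when $d/2 = 1$; so this $\delta$ really tracks the solution $2^{x+1} = d \cdot$ something. I will resolve the precise source of $\delta(d)$ during the case analysis, verify the four cases with a unit coordinate are exactly four and pairwise distinct for $d \ge 3$, confirm no overlap between the unit-coordinate vectors and the $N^{\pm}$-vectors (their coordinates have absolute value $\ge g \ge 2$), and finally check the symmetry bookkeeping that produces each factor of $2$.

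\smallskip

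In summary: (i) reduce to base $g \ge 2$ not a perfect power via the structural lemma and the perfect-power normalization; (ii) split by sign pattern and by whether the minimal exponent is $0$; (iii) identify the "both exponents $\ge 1$" strata with $N^{+}(d)$, $N^{-}(d)$ and their sign-symmetric partners, giving $2N^{+}(d) + 2N^{-}(d)$; (iv) carefully enumerate the boundary vectors (one coordinate a unit, or the symmetric even case) to extract the constant $4 + \delta(d)$; (v) verify disjointness of all strata for $d \ge 3$. The main obstacle is step (iv): getting the constant exactly right, ruling out coincidences, and making sure the parity term $\delta(d)$ is neither double-counted nor missed.
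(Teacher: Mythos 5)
Your overall decomposition is the same as the paper's: the four ``unit-coordinate'' vectors $(1,d+1)$, $(-1,d-1)$, $(-d+1,1)$, $(-d-1,-1)$ give the constant $4$; opposite-sign vectors $(-g^x,g^y)$ with $y>x\ge 1$ together with their companions $(-g^y,g^x)$ give $2N^{+}(d)$; same-sign vectors $(g^x,g^y)$ and $(-g^y,-g^x)$ give $2N^{-}(d)$; and primitivity of the base removes the double counting coming from $g=a^r$. All of that is in order. But there is one genuine gap: you never correctly account for the term $\delta(d)$, and at the point where you try, the reasoning goes wrong. The extra vector for even $d$ is $(-d/2,\,d/2)$, and it is multiplicatively dependent for \emph{every} even $d\ge 4$, simply because $(-c)^2\,c^{-2}=1$ for any nonzero integer $c$. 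You instead test only the relation $a^1b^1=-b^2\neq 1$, conclude nothing, and then half-suggest that $(-d/2,d/2)$ is dependent ``only when $d/2=1$'', which is false. Structurally, this vector is exactly the case your stratification lets slip through: in the opposite-sign branch you reduce to $g^y+g^x=d$ with $x,y\ge 0$ and then assert that the sub-case ``both exponents $\ge 1$'' is \emph{precisely} counted by $N^{+}(d)$ --- but $N^{+}(d)$ requires $y>x$ strictly, so the equal-exponent case $x=y\ge 1$, i.e.\ $2g^x=d$, is counted neither there nor among the unit-coordinate vectors. That single leftover case is the whole content of $\delta(d)$: it occurs if and only if $d$ is even, and then it contributes exactly one vector, $(-d/2,d/2)$, which is distinct from everything else for $d\ge 3$ (its coordinates have equal absolute value $\ge 2$, unlike the vectors in the other strata).

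Everything else you flag as ``to be verified'' (distinctness of the four unit-coordinate vectors for $d\ge 3$, disjointness of the strata, the factor of $2$ from the sign-swapped companions) is routine and is carried out the same way in the paper. So the proposal is salvageable, but as written it does not establish the stated formula: you must isolate the $x=y$ case explicitly and supply the relation $(-c)^2c^{-2}=1$ to certify that $(-d/2,d/2)\in\cM(d)$.
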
 

\begin{proof} 
Let 
$$
S_0 = \{(-d-1,-1), (-d+1,1), (-1,d-1),(1,d+1)\}, 
$$
\begin{align*}
S_1 = \{(-g^x,g^y),(-g^y,g^x):  \textrm{$(g,x,y)$ is a primitive solution of \eqref{eq:Pillai1}} \}
\end{align*}
and 
$$
S_2 = \{(g^x,g^y),(-g^y,-g^x): \textrm{$(g,x,y)$ is a primitive solution of \eqref{eq:Pillai2}} \}.
$$
We claim that 
\begin{equation}\label{countt1}
 \cM(d) = S_0 \cup S_1 \cup S_2
 \end{equation}
 if $d$ is odd, and
\begin{equation}\label{countt2}
\cM(d) = \{(-d/2,d/2) \} \cup S_0 \cup S_1 \cup S_2 
 \end{equation}
if $d$ is even. 

Evidently, $S_0 \subseteq \cM(d)$. Also, $(-d/2,d/2) \in \cM(d)$ if $d$ is even. 
Let us count the vectors $(a,b) \in \cM(d) \setminus \big(S_0 \cup \{(-d/2,d/2)\}\big)$ with $ab<0$. Then,
$a<0<b$, so that such vectors $(a,b)$ have a form of $(-g^x,g^y)$ or $(-g^y,g^x)$ for some positive integer $g\ge 2$ and two non-negative integers $x \le y$.  
If $d$ is even, then $(-d/2,d/2) \in \cM(d)$, which corresponds to the case $g^x=g^y=d/2$, so this solution is not in $S_1 \cup S_2$, and since $d \ge 3$, we have $(-d/2,d/2) \notin S_0$. 
In case $x=0$, that is, $g^x=1$, we obtain two vectors 
$(-d+1, 1), (-1,d-1) \in \cM(d)$, which are already in $S_0$.
Besides, if an integer vector $(g,x,y)$ with $g=a^r\ge 2$ and $y>x \ge 1$ satisfies $g^y+g^x=d$, where $a$ and $r$ are positive integers, then $(g,x,y)$ and $(a,rx,ry)$ are different integer solutions 
of \eqref{eq:Pillai1}, but they produce the same vectors in $\cM(d)$: $(-g^x,g^y)$ and $(-g^y,g^x)$. 
Thus, the sets $\{(-d/2,d/2)\} \cup S_0$ and $S_1$ are disjoint and, by the definition of $S_1$, we have 
$|S_1|=2N^{+}(d)$. 
 
It remains to count the vectors $(a,b)\in \cM(d)$ with $ab>0$. Clearly,
they have the form $(g^x,g^y)$ or $(-g^y,-g^x)$ for some positive integer $g\ge 2$ and two non-negative integers $x,y$ with $y>x\ge 0$. 
If $x=0$, i. e., $g^x=1$, we get two vectors 
$(-d-1, -1), (1,d+1) \in \cM(d)$ which belong to $S_0$.
Now, by the same argument as the above, we see that 
the sets $\{(-d/2,d/2)\} \cup S_0$ and $S_2$ are disjoint and 
$|S_2|=2N^{-}(d)$. 

Finally, since the sets $S_1$ and $S_2$ are disjoint by their definitions 
(and each of them is disjoint from the set $\{(-d/2,d/2)\} \cup S_0$),
 we deduce \eqref{countt}, in view of 
\eqref{countt1}, \eqref{countt2}, 
$|S_0|=4$, $|S_1|=2N^{+}(d)$
and $|S_2|=2N^{-}(d)$.  
\end{proof}

Lemma \ref{lem:size} transfers our problem to estimates for the quantities $N^{+}(d)$ and $N^{-}(d)$.  
Next, using the formulas \eqref{countt1} and \eqref{countt2} we give the explicit constructions for $\cM(d)$ as well as the explicit values for their sizes $M(d)$ in some special cases.  

\begin{theorem} \label{thm:size2}
We have
\begin{itemize}
\item[{\rm (i)}] $M(1)=2, M(2)=5$, and $M(2^r) = 7$ for any positive integer $r \ge 2$;

\item[{\rm (ii)}] $M(d)= 4$ for any odd integer $d\ge 3$.   
\end{itemize}
\end{theorem}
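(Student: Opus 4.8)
The plan is to treat the three families of differences separately, applying Lemma~\ref{lem:size} for $d=2^r$ with $r\ge2$ and for odd $d\ge3$, but handling $d=1$ and $d=2$ by direct enumeration, since these fall outside the range $d\ge3$ of that lemma.

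For $d\in\{1,2\}$ I would argue straight from the structural fact (recorded just before Subsection~\ref{sec:explicit}) that a multiplicatively dependent $(a,b)\in\Z^2$ with $a\ne b$ has the form $(\pm g^x,\pm g^y)$ with $g\ge1$ an integer and $x,y\ge0$, splitting on $\min\{x,y\}$. If $g=1$ or $\min\{x,y\}=0$, then one coordinate is $\pm1$ and the other is forced to be one of $\pm d\pm1$; listing the admissible sign patterns and discarding those that produce a zero coordinate (which is exactly what happens to the candidates $(-d+1,1)$ and $(-1,d-1)$ when $d=1$) leaves $(1,2),(-2,-1)$ for $d=1$ and $(1,3),(-3,-1),(-1,1)$ for $d=2$. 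If instead $g\ge2$ and $\min\{x,y\}\ge1$, then $|a|,|b|\ge2$: when $a,b$ have the same sign the difference equals $\pm g^{\min\{x,y\}}\!\left(g^{|y-x|}-1\right)$, which divides $d$ and immediately pins down $g=2$ together with a single vector (none for $d=1$; $(2,4)$ and $(-4,-2)$ for $d=2$), whereas opposite signs give $|b-a|=|a|+|b|\ge4>d$, a contradiction. Collecting these gives $\cM(1)=\{(1,2),(-2,-1)\}$ and $\cM(2)=\{(1,3),(-3,-1),(-1,1),(2,4),(-4,-2)\}$, hence $M(1)=2$ and $M(2)=5$.

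For the remaining cases Lemma~\ref{lem:size} reduces the task to evaluating $N^{+}(d)$ and $N^{-}(d)$, which is elementary parity and divisibility analysis of \eqref{eq:Pillai1}--\eqref{eq:Pillai2} rewritten as $g^x\!\left(g^{y-x}\pm1\right)=d$. If $d$ is odd: an even $g$ makes the left side even, and an odd $g$ makes $g^{y-x}\pm1$ even, so in either case the left side is even unless $x=0$, which is excluded; hence $N^{+}(d)=N^{-}(d)=0$ and \eqref{countt} gives $M(d)=4$. If $d=2^r$: in $g^x\!\left(g^{y-x}+1\right)=2^r$ an odd $g$ forces $g^x=1$ (excluded) while an even $g$ makes $g^{y-x}+1$ an odd divisor $\ge3$ of $2^r$ (impossible), so $N^{+}(2^r)=0$; in $g^x\!\left(g^{y-x}-1\right)=2^r$ an odd $g$ is again excluded and an even $g$ forces the odd factor $g^{y-x}-1$ to equal $1$, so $g=2$, $y-x=1$, $x=r$, $y=r+1$, which is primitive ($2$ is not a perfect power) and clearly the only solution, whence $N^{-}(2^r)=1$. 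Since $\delta(2^r)=1$, \eqref{countt} yields $M(2^r)=2\cdot0+2\cdot1+4+1=7$ for all $r\ge2$.

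The one point that genuinely needs care is the bookkeeping for $d=1,2$: for $d=1$ two of the four vectors in the set $S_0$ appearing in the proof of Lemma~\ref{lem:size} acquire a zero coordinate and therefore do not lie in $\cM(1)$, and for $d=2$ there are coincidences, namely $(-d+1,1)=(-1,1)=(-1,d-1)=(-d/2,d/2)$, so the formula \eqref{countt} overcounts by exactly $2$ (indeed $N^{+}(2)=0$, $N^{-}(2)=1$ would naively give $7$ rather than $5$). This is precisely why $d=1,2$ are excluded from Lemma~\ref{lem:size} and must be enumerated by hand; everything else is routine.
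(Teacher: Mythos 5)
Your proposal is correct and follows essentially the same route as the paper: direct enumeration of $\cM(1)$ and $\cM(2)$, then an application of Lemma~\ref{lem:size} after showing $N^{+}(2^r)=0$, $N^{-}(2^r)=1$ via the coprime factorization $g^x\left(g^{y-x}\pm 1\right)=2^r$, and $N^{+}(d)=N^{-}(d)=0$ for odd $d$ by parity. Your extra remark explaining \emph{why} the formula \eqref{countt} fails for $d=1,2$ (degenerate and coinciding elements of $S_0$) is a nice addition but not a departure from the paper's argument.
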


\begin{proof}
It is straightforward to check that   
$$
\cM(1) = \{ (-2,-1), (1,2)\}, 
$$
and 
$$
\cM(2) = \{(-4,-2), (-3,-1), (-1,1), (1,3), (2,4) \}. 
$$ 

Now, we consider the set $\cM(2^r)$, where $r \ge 2$. 
We first look at the equation \eqref{eq:Pillai1} with $d=2^r$. 
Notice that $g^x(g^{y-x}+1) = 2^r$. 
Since $x \ge 1$ and $\gcd(g^x,g^{y-x}+1)=1$, the left-hand side $g^x(g^{y-x}+1)$ has at least two distinct prime factors. 
So, there is no integer solution of the equation \eqref{eq:Pillai1}.
Consequently, $N^{+}(2^r)=0$. 

Next, let us consider the equation \eqref{eq:Pillai2} with $d=2^r$. 
This time, in view of $g^x(g^{y-x}-1) = 2^r$ and $x \ge 1$, we must have
$g^x=2^r$ and $g^{y-x}=2$. Hence,
 $(g,x,y)=(2,r,r+1)$ is the only primitive integer solution of \eqref{eq:Pillai2}. 
It follows that $N^{-}(2^r)=1$, which gives two vectors 
$$
(-2^{r+1},-2^r), (2^r,2^{r+1}) \in \cM(2^r). 
$$ 

So, by Lemma~\ref{lem:size}, it follows that $M(2^r)=2 \cdot 0 + 2\cdot 1+4+1=7$ for $r \ge 2$, as claimed.
 This completes the proof of (i).

Now, let $d\ge 3$ be an odd integer. 
Considering the equation \eqref{eq:Pillai1},  we first note that, since $x \ge 1$, it is impossible to have $g^y+g^x = d$ for $d$ odd, because $g^y+g^x$ is even.  
Similarly, there is also no integer solution of the equation \eqref{eq:Pillai2} for $d$ odd. 
Using $N^{+}(d)=N^{-}(d)=0$, by Lemma~\ref{lem:size}, we obtain $M(d)=4$, as claimed in (ii), and in fact 
$$
\cM(d) = \{(-d-1,-1), (-d+1,1), (-1,d-1),(1,d+1)\}
$$
for each odd $d \geq 3$.
\end{proof}

To handle the case 
when $d$ is the product of a power of 2 and a power of an odd prime, i. e., 
 $d=2^r p^s$, where $p \ge 3$ is a prime and $r, s\ge 1$,
we shall
 use Mih{\u a}ilescu's theorem, that is, 
 Lemma \ref{lem:Catalan}.
Recall that a prime number $p$ is said to be a \textit{Fermat prime} if $p=2^m+1$ for some positive integer $m$, 
and consequently $m$ must be a power of $2$.  
So far, the only known Fermat primes are $3, 5, 17, 257, 65537$. 
Also, recall that a prime number $p$ is called a \textit{Mersenne prime} if $p=2^m-1$ for some positive integer $m$, 
and in fact $m$ must be also a prime.

\begin{theorem} \label{thm:size23}
 Let $r$ and $s$ be two positive integers. For $1\le r \le 3$ we have 
\begin{equation*}
M(2^r3^s) = 
\begin{cases}
11 & \textrm{if } s=1, \\
9 & \textrm{if } s=2, \\
7 & \textrm{if } s \ge 3; 
\end{cases} 
\end{equation*} 
for $r\ge 4$, we have 
\begin{equation*}
M(2^r3^s) = 
\begin{cases}
9 & \textrm{if } s=1, \\
7 & \textrm{if } s=2, \\
5 & \textrm{if } s \ge 3. 
\end{cases} 
\end{equation*} 

Let $p \ge 5$ be a prime, and let $r, s$ be two positive integers. Then, 
\begin{equation*}
M(2^rp^s)= 
\begin{cases}
9 & \textrm{if $s=1$, and either $p=2^r+1$ or $p=2^r-1$}, \\
7 & \textrm{if $s\ge 2$, and either $p=2^r+1$ or $p=2^r-1$}, \\
7 & \textrm{if $s=1$, and either $p$ is a Fermat prime satisfying } \\
    &  \textrm{$p \ne 2^r+1$,  or 
$p$ is a Mersenne prime satisfying}\\
&  \textrm{$p \ne 2^r -1$}, \\
5 & \textrm{otherwise}.
\end{cases} 
\end{equation*} 
\end{theorem}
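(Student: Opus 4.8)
The plan is to invoke Lemma~\ref{lem:size}, so that everything reduces to computing $N^{+}(d)$ and $N^{-}(d)$ for $d = 2^r p^s$ with $p$ an odd prime and $r,s \ge 1$, and then substituting into \eqref{countt}; note that $\delta(d)=1$ since $d$ is even, so $M(d) = 2N^{+}(d) + 2N^{-}(d) + 5$. First I would analyse \eqref{eq:Pillai1}, i.e.\ $g^y + g^x = d$ with $g \ge 2$, $y > x \ge 1$, and $g$ not a perfect power. Writing $g^x(g^{y-x}+1) = 2^r p^s$ and using $\gcd(g^x, g^{y-x}+1)=1$, the factor $g^x$ (having only one prime) must be a prime power dividing $2^r p^s$: either $g^x = 2^r$ or $g^x = p^s$ (the mixed cases being excluded because $g^x$ is a prime power and $g^{y-x}+1 > 1$ must carry the remaining prime). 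In the case $g^x = 2^r$: primitivity forces $g = 2$, $x = r$, and then $g^{y-x}+1 = p^s$, i.e.\ $2^{y-r} + 1 = p^s$; by Mih\u ailescu's theorem (Lemma~\ref{lem:Catalan}) this forces $s = 1$ unless $(2^{y-r}, p^s) = (8,9)$, i.e.\ $p = 3$, $s = 2$, $y - r = 3$. So for $p = 3$ we get a contribution from $s=2$, $y=r+3$; for general $p$ we get a contribution exactly when $s = 1$ and $p = 2^{y-r}+1$ is a Fermat prime. In the case $g^x = p^s$: primitivity forces $g = p$, $x = s$, and $p^{y-s}+1 = 2^r$; again Mih\u ailescu forces $y - s = 1$ (the exponent on $p$ is at least $1$, and $2^r$ has exponent $\ge 1$, so no exceptional solution unless $2^r$ is a square and equals $9$, impossible), giving $p + 1 = 2^r$, i.e.\ $p$ is a Fermat prime with $p = 2^r - 1$ — wait, $p+1 = 2^r$ means $p = 2^r - 1$, a Mersenne prime. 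Hence this subcase contributes exactly when $p = 2^r - 1$.

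Next I would treat \eqref{eq:Pillai2}, i.e.\ $g^y - g^x = d$, writing $g^x(g^{y-x}-1) = 2^r p^s$ with $\gcd(g^x, g^{y-x}-1)=1$. Again $g^x$ is a prime power dividing $2^r p^s$, so $g^x \in \{2^r, p^s\}$ (possibly also $g^x$ a smaller prime power, but then the complementary factor $g^{y-x}-1$ would have to be a prime power too, and I would check that $g^{y-x}-1 = 1$ is impossible since $y > x$, while $g^{y-x}-1$ being a pure power of the other prime with $g$ itself a prime power quickly collapses via Mih\u ailescu). In the subcase $g^x = 2^r$: $g = 2$, $x = r$, $2^{y-r} - 1 = p^s$; a Mersenne-type equation, and Catalan forces $s = 1$ unless $2^{y-r} - 1$ is a perfect power $\ge 2$ with exponent $\ge 2$, which Lemma~\ref{lem:Catalan} rules out (no solution to $b^y - 2^x = 1$ with both exponents $\ge 2$ and the "1" on the correct side — here $2^{y-r} = p^s + 1$, so if $s \ge 2$ we need $2^{y-r} - p^s = 1$ with $y-r \ge 2$ forced since $p^s \ge 9$, contradicting Catalan except for $(9,8)$ which has the wrong parity). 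So $s = 1$ and $p = 2^{y-r} - 1$ is Mersenne. In the subcase $g^x = p^s$: $g = p$, $x = s$, $p^{y-s} - 1 = 2^r$, i.e.\ $2^r + 1 = p^{y-s}$; Catalan forces $y - s = 1$ unless $(2^r, p^{y-s}) = (8,9)$, giving either $p = 2^r + 1$ (Fermat, any admissible $y-s=1$) or the exceptional $p = 3$, $r = 3$, $y - s = 2$.

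Having enumerated all primitive solutions, I would assemble $N^{+}(d)$ and $N^{-}(d)$ by case. For $p \ge 5$: when $s = 1$ and $p = 2^r + 1$, we get one solution of \eqref{eq:Pillai2} (and also, if $p = 2^r+1$ is Fermat with exponent matching, a solution of \eqref{eq:Pillai1} — but here the Fermat exponent equals $r$ exactly, so indeed $N^{+}=1$ too when $p = 2^r+1$); when $s = 1$ and $p = 2^r - 1$, we get one solution of \eqref{eq:Pillai1} and one of \eqref{eq:Pillai2}; when $s = 1$ and $p$ is a Fermat prime $\ne 2^r + 1$, we get one solution of \eqref{eq:Pillai1} with $g^x = 2^{r'}$ where $p = 2^{r'}+1$ — but wait, this requires $r' = r$, so actually $p = 2^r+1$; hence a Fermat prime $p \ne 2^r+1$ contributes to \eqref{eq:Pillai1} only through the subcase $g^x = p^s$, $p+1 = 2^{?}$, which needs $p$ Mersenne. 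I would need to be careful here: the condition for \eqref{eq:Pillai1} to have a solution via $g^x = 2^r$ is precisely $p = 2^{y-r}+1$ for some $y > r$, and since $y - r$ can be any positive integer this is just "$p$ is a Fermat prime" but the resulting $d = 2^r + 2^{r}\cdot p$... no: $d = 2^y + 2^r = 2^r(2^{y-r}+1) = 2^r p$, forcing $s = 1$ and $2^{y-r} = p - 1$, so $y - r$ is determined by $p$, and there is no constraint tying it to $r$ — so \emph{any} Fermat prime $p$ with $s = 1$ gives a solution of \eqref{eq:Pillai1}. The constraint $p = 2^r+1$ vs $p \ne 2^r+1$ instead governs whether \emph{also} $g^x = p^s$ yields something. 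Sorting this bookkeeping correctly is the main obstacle: the four cases in the statement come from tallying $2N^+ + 2N^- + 5 \in \{5,7,9\}$, and I expect: $p = 2^r\pm 1$, $s=1$ gives $N^+ + N^- = 2$ hence $M = 9$; the same $p$ with $s \ge 2$ kills the $g^x = 2^r$/$g^x=p^s$ interplay down to $N^++N^- = 1$ hence $M = 7$; a Fermat or Mersenne prime not of the form $2^r \pm 1$ with $s = 1$ gives $N^+ + N^- = 1$ hence $M = 7$; and otherwise $N^+ = N^- = 0$ hence $M = 5$. The $p = 3$ analysis is parallel but richer because of the genuine Catalan exception $2^3 + 1 = 3^2$, which adds solutions for $s = 2$ (and for $r = 3$ an extra one), and because $3 = 2^2 - 1 = 2^1 + 1$ simultaneously, so small $r$ ($1 \le r \le 3$) behaves specially while $r \ge 4$ loses those coincidences; I would tabulate $N^+(2^r 3^s)$ and $N^-(2^r 3^s)$ for $s = 1, 2, \ge 3$ and $r \le 3$, $r \ge 4$ separately and read off the six stated values. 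The main obstacle throughout is disciplined case-splitting on which prime power $g^x$ equals, correct invocation of Lemma~\ref{lem:Catalan} to eliminate $s \ge 2$ (with attention to the one genuine exception), and clean bookkeeping so that the primitivity condition and the "$g$ not a perfect power" constraint are not double-counted.
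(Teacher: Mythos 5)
Your proposal follows essentially the same route as the paper: reduce via Lemma~\ref{lem:size} to counting primitive solutions of \eqref{eq:Pillai1} and \eqref{eq:Pillai2}, split on whether $g^x=2^r$ or $g^x=p^s$, and apply Mih\u ailescu's theorem with the single exception $3^2-2^3=1$ governing the extra solutions at $s=2$ and at $r=3$. Your $p\ge 5$ tally is correct, and although you only sketch the $p=3$ tabulation, you have identified all the coincidences ($3=2^1+1=2^2-1$, $2^3+1=3^2$, $3^2-1=2^3$) that the paper's formulas \eqref{durn1}--\eqref{durn2} encode, so the remaining bookkeeping is routine.
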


\begin{proof}

By Lemma~\ref{lem:size}, it suffices to count primitive integer solutions of the equations \eqref{eq:Pillai1} and \eqref{eq:Pillai2}. 

Consider the equation \eqref{eq:Pillai1} with $d=2^r p^s$, where $p \ge 3$ is a prime. 
Since $y>x \ge 1$, from $g^y+g^x = g^x(g^{y-x}+1) = 2^r p^s$, 
we must have 
\begin{equation}  \label{eq:23+}
\textrm{either} \quad 
\begin{cases}
g^x = 2^r \\
g^{y-x} + 1 = p^s 
\end{cases} 
\quad \textrm{or} \quad 
\begin{cases}
g^x = p^s \\
g^{y-x} +1 = 2^r. 
\end{cases}
\end{equation}

In the first case, since $g$ is not a perfect power, we must have
$g=2$ and $x=r$. 
The second equation $g^{y-x} + 1 = p^s$ becomes 
\begin{equation} \label{eq:23_1}
2^{y-r} + 1 = p^s.
\end{equation}
By Lemma \ref{lem:Catalan}, in \eqref{eq:23_1} we cannot have $s \ge 3$. Suppose
that in \eqref{eq:23_1} we have $s=2$.
Then, by Lemma \ref{lem:Catalan}, $p=3$ and $y=r+3$.
This gives the unique primitive solution $(g,x,y)=(2,r,r+3)$
of \eqref{eq:Pillai1}. 
If in \eqref{eq:23_1} we have $s=1$ , then there is a unique primitive solution
  of \eqref{eq:Pillai1} if and only if
$p$ is a Fermat prime. (Otherwise, \eqref{eq:Pillai1} has no primitive solutions.) Consequently, the contribution of the ``first case" into
the quantity
$N^{+}(2^r p^s)$ is one if $(p,s)=(3,2)$ or if $p$ is a Fermat prime and $s=1$, and zero otherwise. 

In the second case of \eqref{eq:23+}, we must have $g=p$ and $x=s$. 
The second equation $g^{y-x} + 1 = 2^r$ becomes 
\begin{equation} \label{eq:23_2}
p^{y-s} + 1 = 2^r.
\end{equation} 
Clearly, $r \ge 2$. 
Note that we cannot have $y-s\ge 2$ in \eqref{eq:23_2}, by Lemma \ref{lem:Catalan}.  
Hence, $y=s+1$. This yields $p=2^r-1$.  Hence, 
 the contribution of the ``second case" of \eqref{eq:23_2} into the quantity
$N^{+}(2^r p^s)$
is one if and only if $p=2^r-1$, where $r \ge 2$, and zero otherwise. 
Combining both these contributions we deduce that
\begin{equation}\label{durn1}
N^{+}(2^r p^s) = 
\begin{cases}
2 & \textrm{if } p=3, r=2, s \in \{1,2\},\\
1 & \textrm{if } p=3, r \ne 2, s \in \{1,2\}, \\
1 & \textrm{if } p=3, r=2, s \ge 3, \\
1 & \textrm{if } p \ge 5 \>\> \textrm{is a Fermat prime and } s=1, \\
1 & \textrm{if } p =2^r-1 \>\> \textrm{and } r \ge 3, \\
0 & \textrm{otherwise}.
\end{cases} 
\end{equation}

Now, let us investigate the equation \eqref{eq:Pillai2} with $d=2^r p^s$.
Since $y>x \ge 1$, by $g^y-g^x = g^x(g^{y-x}-1) = 2^r p^s$, 
we must have 
\begin{equation} \label{eq:23-}
\textrm{either} \quad 
\begin{cases}
g^x = 2^r \\
g^{y-x} - 1 = p^s 
\end{cases} 
\quad \textrm{or} \quad 
\begin{cases}
g^x = p^s \\
g^{y-x} - 1 = 2^r. 
\end{cases}
\end{equation}
In the first case of \eqref{eq:23-}, we obtain $(g,x)=(2,r)$, and 
the second equation $g^{y-x} - 1 = p^s$ becomes 
\begin{equation} \label{eq:23_3}
2^{y-r} - 1 = p^s.
\end{equation} 
Clearly, we must have $y-r\ge 2$. 
By Lemma \ref{lem:Catalan}, the equality in \eqref{eq:23_3} can not hold for $s \ge 2$. 
For $s=1$ there is a unique integer solution of \eqref{eq:23_3}
if and only if $p$ is a Mersenne prime. 

In the second case of \eqref{eq:23-}, we obtain $(g,x)=(p,s)$.  
The second equation $g^{y-x} - 1 = 2^r$ becomes 
$$
p^{y-s} - 1 = 2^r.
$$
For $r=1$ we obtain $p=3$ and $y=s+1$. For $r=3$, we must have
$p=3$ and $y=s+2$. Then, for $r \in \N \setminus \{1,3\}$, by Lemma~\ref{lem:Catalan}, we must
have $y=s+1$ and so $p$ is a Fermat prime of the form $p=2^r+1$. 
Therefore, as above, combining both contributions into $N^{-}(2^rp^s)$ we derive that
\begin{equation}\label{durn2}
N^{-}(2^r p^s) = 
\begin{cases}
2 & \textrm{if } p=3, s=1, r \in \{1,3\},\\
1 & \textrm{if } p=3, s=1, r \notin \{1,3\},\\
1 & \textrm{if } p=3, s \ge 2, r \in \{1,3\},\\
1 & \textrm{if } p \ge 7 \>\> \textrm{is a Mersenne prime and } s=1, \\
1 & \textrm{if } p =2^r+1 \>\> \textrm{and } r \ge 2, \\
0 & \textrm{otherwise}.
\end{cases} 
\end{equation}

Finally, applying Lemma~\ref{lem:size} and combining  
\eqref{durn1} with \eqref{durn2} first for $p=3$ and then for $p \geq 5$, we conclude the proof. 
\end{proof}

Obviously, given an explicit value of $d$, following the arguments in the proof of Theorem~\ref{thm:size23} 
we can compute the exact value of $M(d)$. 
However, the argument can be quite complicated when $d$ has many distinct prime factors. 
At the end of the paper we will present an algorithm which allows to calculate $M(d)$ for any given even integer $d \in \N$.

\subsection{Unconditional upper bound}

Note that in the above we have obtained the exact value of $M(d)$ when $d$ is either odd or has at most two distinct 
prime factors. 
Now, we present an unconditional upper bound for $M(d)$ when $d$ is even and has at least three distinct prime factors. 

\begin{theorem} \label{thm:upper}
Suppose that an even integer $d \in \N$ has $m\ge 3$ distinct 
prime factors. Then,  
\begin{equation} \label{eq:upper1}
M(d) \le 2^{m+1} + 1.
\end{equation}
Furthermore, if $d$ is square-free, then
\begin{equation} \label{eq:upper2}
M(d) \le 
\begin{cases}
13 & \textrm{if $m=3$}, \\
2^{m+1} + 7 - 4m & \textrm{if $m\ge 4$.}
\end{cases}
\end{equation}
\end{theorem}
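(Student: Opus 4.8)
The plan is to use Lemma~\ref{lem:size}, which reduces the problem to bounding $N^{+}(d)$ and $N^{-}(d)$, the numbers of primitive solutions $(g,x,y)$ of $g^y\pm g^x=d$ with $g\ge 2$, $y>x\ge 1$. Write $d=2^{e_0}p_1^{e_1}\cdots p_{m-1}^{e_{m-1}}$ with $p_1<\cdots<p_{m-1}$ odd primes. For a solution of \eqref{eq:Pillai1} or \eqref{eq:Pillai2} we factor $g^y\pm g^x=g^x(g^{y-x}\pm 1)=d$, and since $\gcd(g^{y-x},g^{y-x}\pm 1)=1$ the set of prime divisors of $d$ splits into the (nonempty, because $x\ge 1$) set dividing $g^x$ and the set dividing $g^{y-x}\pm 1$. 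Thus $g^x$ is one of the divisors $e$ of $d$ whose radical is a proper nonempty subset $T$ of $\{2,p_1,\dots,p_{m-1}\}$; but moreover $g^x=e$ forces $e$ to be a perfect power of a non-perfect-power $g$, and then $x$ and $y-x$ are determined up to the ambiguity that, for \emph{fixed} $g$, the equation $g^{y-x}=d/g^x\mp 1$ has at most one solution $y-x\ge 1$ (strict monotonicity of $g^{z}$ in $z$). So for each choice of the ``base part'' $g^x$ there is at most one primitive solution of each of \eqref{eq:Pillai1}, \eqref{eq:Pillai2}. This already gives $N^{+}(d)+N^{-}(d)\le$ (number of admissible values of $g^x$), and since $g^x$ ranges over divisors of $d$ supported on a proper nonempty subset of the $m$ primes, crude counting gives $N^{+}(d)+N^{-}(d)\le 2^m-2$ once one is slightly careful, whence $M(d)=2N^{+}(d)+2N^{-}(d)+4+\delta(d)\le 2(2^m-2)+4+1=2^{m+1}+1$. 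That settles \eqref{eq:upper1}.

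For the square-free refinement \eqref{eq:upper2}, all $e_i=1$, so $g^x=\prod_{p\in T}p$ is itself square-free, and for $g^x$ to be a genuine $x$-th power with $x\ge 1$ of a non-perfect-power $g$ we are forced to $x=1$ and $g=\prod_{p\in T}p$. Hence \emph{each} admissible subset $T\subsetneq\{2,p_1,\dots,p_{m-1}\}$, $T\ne\emptyset$, contributes at most one primitive solution to \eqref{eq:Pillai1} and at most one to \eqref{eq:Pillai2}, i.e.\ $N^{+}(d)+N^{-}(d)\le 2(2^m-2)$ naively. The improvement comes from showing that many subsets $T$ contribute \emph{nothing}: if $|T|\ge 2$ then $g=\prod_{p\in T}p$ is composite and the complementary factor $d/g\mp 1=g^{y-1}$ would be a perfect power of a composite number $\ge 6$ with exponent $y-1\ge 1$, and one checks (for $y-1=1$ it is just the single value $d/g\mp1$; for $y-1\ge 2$ one invokes that $g^{y-1}$ then has the same prime support as $g$, which is $\ge 2$ primes) that these events are extremely restrictive — in fact for the key case $m=3$ one enumerates the finitely many shapes of $T$ directly, using Lemma~\ref{lem:Catalan} to kill the higher-power possibilities, and finds the bound $13$; for $m\ge 4$ one shows that every singleton $T=\{p\}$ can contribute to \emph{at most one} of the two equations (a prime cannot satisfy both $p\mid d,\ d/p=q^{y-1}+1$ and the minus analogue in a way that both survive) and that subsets of size $\ge 2$ contribute $O(1)$, losing $4m-$something from the trivial count; bookkeeping then yields $2^{m+1}+7-4m$.

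The main obstacle I expect is not the crude count but getting the \emph{exact} constants $13$ and $7-4m$: this requires carefully partitioning the $2^m-2$ admissible subsets $T$ by size, using Mih\u ailescu's theorem (Lemma~\ref{lem:Catalan}) to eliminate solutions with $y-x\ge 2$ except the single Catalan exception $9-8=1$, and then proving a counting lemma of the form ``for each odd prime $p\mid d$, the number of primitive solutions with $g^x$ a power of $2$ times nothing, or $g^x$ equal to $p$, is limited, and these limits cannot all be attained simultaneously across the equations.'' Concretely, I would: (1) state and prove the reduction ``each admissible pair (base subset $T$, sign $\pm$) gives $\le 1$ primitive solution''; (2) for square-free $d$ identify the base as $\prod_{p\in T}p$ with $x=1$; (3) split into $|T|=1$, $|T|=2$, $|T|\ge 3$ and in each range bound the number of surviving $(T,\pm)$ using Lemma~\ref{lem:Catalan}; (4) sum the contributions, treating $m=3$ by explicit enumeration and $m\ge 4$ by the general inequality; (5) translate back through Lemma~\ref{lem:size}. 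The inductive/combinatorial step (3) for $|T|=1$, where one must show a prime $p$ is ``used'' in at most one equation, is the delicate part and is where the $-4m$ saving is extracted.
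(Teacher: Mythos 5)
Your reduction for \eqref{eq:upper1} has the right skeleton: via Lemma~\ref{lem:size} one sends each primitive solution to its base $g^x$, which lies in the set of unitary divisors $\cD(d)=\{j:\ j\mid d,\ \gcd(j,d/j)=1,\ 1<j<d\}$ of size $2^m-2$, and the uniqueness of writing $e=g^x$ with $g$ not a perfect power makes this map injective on each equation separately. But as written you only obtain $N^{+}(d)+N^{-}(d)\le 2(2^m-2)$, hence $M(d)\le 2^{m+2}-3$; the step you defer with ``once one is slightly careful'' is precisely the one nontrivial point, namely that a single unitary divisor $e$ cannot be the base of a solution of \emph{both} equations. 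If it were, then $d/e+1$ and $d/e-1$ would both be powers of the same non-perfect-power $g$, so $g^{v}-g^{u}=2$ with $u,v\ge 1$, forcing $g=2$, $d/e=3$ and $d=3\cdot 2^x$, which is excluded by $m\ge 3$. Only with this disjointness does the count drop to $N^{+}(d)+N^{-}(d)\le 2^m-2$ and give $2^{m+1}+1$.

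The square-free refinement is where your plan actually goes astray. The savings do not come from singletons ``contributing to at most one equation'' (that is just the disjointness above, already absorbed into the $2^m-2$ count, so it yields no further $-4m$), nor from subsets of size $\ge 2$ being rare --- for $d=30$ the base $6=2\cdot 3$ does occur, via $30=36-6$, and indeed all four of $2,3,5,6$ occur as bases there. The correct mechanism is that the \emph{large} unitary divisors can never be bases: if $g^x=d/p_i$ with $1\le i\le m-1$, then square-freeness forces $x=1$ and $g=d/p_i$, and the cofactor equation reads $g^{y-1}\pm 1=p_i$; but $d/p_i$ is a product of at least two primes one of which is $p_m>p_i$, so $g>p_i+1$ and $g^{y-1}\ge g$ is already too large for any $y>1$. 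This removes $m-1$ divisors from the image and gives $M(d)\le 2^{m+1}+3-2m$, which is $13$ at $m=3$. For $m\ge 4$ the same size argument removes the further $m-2$ divisors $d/(2p_i)$, $2\le i\le m-1$, giving $2^{m+1}+7-4m$. Your proposed case split by $|T|$ with Mih\u ailescu's theorem killing higher powers does not produce these constants and would not locate the excluded divisors.
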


\begin{proof}
We first define the subset of factors of $d$: 
$$
\cD(d) = \{j:\, j\mid d, \gcd(j,d/j)=1,1<j<d\}. 
$$
Since $d$ has $m$ distinct prime factors, where $m\ge 3$, we have 
$$
|\cD(d)| = \binom{m}{1} + \binom{m}{2} + \cdots + \binom{m}{m-1} = 2^m - 2. 
$$

From \eqref{eq:Pillai1},  since $1 \le x<y$ and $d=g^x(g^{y-x}+1)$, in view of $\gcd(g^x,g^{y-x}+1)=1$, 
we obtain $g^x \in \cD(d)$. By the same argument, from
 \eqref{eq:Pillai2} it follows that $g^x \in \cD(d)$. 
However, since $d$ is not of the form $2^r \cdot 3$, 
there are no positive integer $g \ge 2$ and non-negative integers $x,u,v$ for which
$$
d = g^x(g^u+1) = g^x(g^v-1). 
$$
This means that $g^x$ counted as a primitive solution $(g,x,y)$ in $N^{+}(d)$ and $g^x$ similarly counted in $N^{-}(d)$ are distinct. 
Thus, we obtain 
$$
N^{+}(d) + N^{-}(d) \le |\cD(d)| = 2^m - 2. 
$$
Therefore, applying Lemma~\ref{lem:size}, we deduce that 
$$
M(d) = 2N^{+}(d) + 2N^{-}(d) + 5 \le 2^{m+1} + 1. 
$$
This completes the proof of \eqref{eq:upper1}. 

From the above discussion, we see that there is an injective map, say $\sigma$, from the primitive integer solutions of \eqref{eq:Pillai1} or \eqref{eq:Pillai2} 
to the set $\cD(d)$ that sends $(g,x,y)$ to $g^x$. 
To prove the second part in \eqref{eq:upper2}, we need to show that there are $m$ elements in $\cD(d)$ which are not in the image of $\sigma$ when $m \ge 4$.  
Now, we assume that $d$ is square-free with the following prime factorization 
$$
d = p_1p_2 \cdots p_m, \qquad p_1=2 < p_2 < \cdots < p_m.  
$$

 We first claim that the cases $g^x = d / p_i$, $1\le i \le m-1$, cannot happen neither in \eqref{eq:Pillai1} nor in \eqref{eq:Pillai2}. 
 Indeed, fix $p_i$, where $ i<m$. If the equation \eqref{eq:Pillai1} has an integer solution with $g^x = d/p_i$, then we must have $g=d/p_i$ and $x=1$. Thus,
 by $d=g^y+g^x$ and by the choice of $p_i$, we obtain $y=1$, which contradicts to $y>x$. 
 Similarly, we can show that the equation \eqref{eq:Pillai2} has no integer solution $(g,x,y)$ for which $g^x = d/p_i$. 
 This proves the claim, and this claim actually shows that these $m-1$ elements ($d/p_i,i=1,2,\ldots,m-1$) in $\cD(d)$ are not 
 in the image of $\sigma$. Hence, we have 
 $$
M(d) \le 2(|\cD(d)| - (m-1)) +5  = 2^{m+1} +3 -2m.
$$
In particular, this implies the first part of \eqref{eq:upper2} when $m=3$. 
 
 To complete the proof, we only need to exclude $m-2$ more cases when $m\ge 4$. 
 For any $2\le i < m$, as the above,  both equations \eqref{eq:Pillai1} and \eqref{eq:Pillai2} have no integer solution with $g^x = d/(p_1p_i)$, 
 where we need to use $m \ge 4$.  
 So, this shows that these $m-2$ elements ($d/(p_1p_i),i=2,3,\ldots,m-1$) in $\cD(d)$ are not in the image of $\sigma$. 
 This in fact completes the proof. 
\end{proof}

We remark that the estimate \eqref{eq:upper2} is optimal in general. 
For example, $M(30)=13$, which achieves the first upper bound in \eqref{eq:upper2}. 
In fact, $\cM(30)$ consists of the following $13$ vectors: 
\begin{align*}
& (-15,15), (-1,29), (-29,1), (1,31), (-31,-1), (-5,25), (-25,5),\\
& (-3,27), (-27,3),  (2,32), (-32,-2), (6,36), (-36, -6). 
\end{align*}
Here, except for the five vectors in the set $\{(-15,15)\} \cup S_0$, we have eight more vectors in view of 
$$30=5^2+5=3^3+3=6^2-6=2^5-2,$$
so that $N^{+}(30)=N^{-}(30)=2$.

\subsection{Conditional upper bound}

Actually, under the \textit{ABC conjecture}, there is a uniform upper bound for $M(d)$ where $d \in \N$.  
To show this, we need some preparations. 

Recall that the $ABC$ conjecture asserts that for a given real $\varepsilon>0$ there exists a constant $K_\varepsilon$ 
depending only on $\varepsilon$ such that for any non-zero integers $A,B,C$ satisfying
$$
A+B=C
$$
and $\gcd(A,B)=1$ we have
$$
\max \{ |A|,|B|,|C|\} \le K_\varepsilon \big( \prod_{p \mid ABC}p \big)^{1+\varepsilon},
$$
where $p$ runs through all the (distinct) prime factors of $ABC$. 

We first show an unconditional result, which is an analogue of \cite[Theorem 6.2]{Bugeaud}. 

\begin{lemma}  \label{lem:ab}
Assume that $x_1,x_2,y_1,y_2$ are fixed positive integers with $x_1>x_2,y_1>y_2,x_1>y_1, \gcd(x_1,x_2)=1$ and $\gcd(y_1,y_2)=1$. 
Then, the equation 
\begin{equation}  \label{eq:ab1}
a^{x_1} + a^{x_2} = b^{y_1} + b^{y_2}
\end{equation}
has only finitely many positive integer solutions $(a,b)$. 
\end{lemma}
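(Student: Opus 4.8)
The plan is to read an integer solution $(a,b)$ of \eqref{eq:ab1} with $a,b\ge2$ (the solutions with $a\le1$ or $b\le1$ are finite in number and harmless) as an integral point on the affine plane curve
$$
\CC:\qquad X^{x_1}+X^{x_2}=Y^{y_1}+Y^{y_2},
$$
and then to invoke Siegel's theorem on integral points on curves. Because the four exponents are fixed, the entire statement reduces to showing that $\CC$ has only finitely many integral points; by Siegel's theorem this is automatic unless the smooth projective model of $\CC$ has genus $0$ while $\CC$ itself has at most two points at infinity. Thus the task is to rule out this degenerate situation, and this is where the hypotheses $x_1>x_2\ge1$, $y_1>y_2\ge1$, $x_1>y_1$ and $\gcd(x_1,x_2)=\gcd(y_1,y_2)=1$ come into play.

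First I would establish that $\CC$ is geometrically irreducible; writing $P(X)=X^{x_1}+X^{x_2}$ and $Q(Y)=Y^{y_1}+Y^{y_2}$, a short argument based on the two-term shape and the conditions $\gcd(x_1,x_2)=1$, $x_1\ne y_1$ shows that $P$ and $Q$ are indecomposable and that $P(X)-Q(Y)$ is irreducible over $\C$. Next I would locate the points at infinity: homogenising, the unique point of the projective closure on $\{Z=0\}$ is $[0:1:0]$, since $x_1$ strictly exceeds $x_2$, $y_1$ and $y_2$; and a Newton-polygon analysis of the Puiseux branches at $[0:1:0]$ --- the governing lower edge joins $(x_1,0)$ to $(0,x_1-y_1)$, while the monomial coming from $X^{x_2}$ lies strictly above it --- shows that on the normalisation $[0:1:0]$ splits into exactly $g:=\gcd(x_1,y_1)$ points. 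Hence, if $g\ge3$, then $\CC$ has at least three points at infinity and Siegel's theorem already gives the conclusion.

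It remains to treat $g\in\{1,2\}$, where one must prove that $\CC$ has positive genus. If $y_1=2$ --- so $y_2=1$ and $x_1\ge3$ --- this follows directly from Lemma~\ref{lem:Siegel}: multiplying \eqref{eq:ab1} by $4$ and completing the square yields $(2b+1)^2=f(a)$ with $f(X)=4X^{x_1}+4X^{x_2}+1$, and one checks that $f$ has at least three simple roots, because a common root $\xi\ne0$ of $f$ and $f'=4X^{x_2-1}(x_1X^{x_1-x_2}+x_2)$ would satisfy both $\xi^{x_1-x_2}=-x_2/x_1$ and $\xi^{x_2}=-x_1/(4(x_1-x_2))$, hence the identity $x_1^{x_1}=x_2^{x_2}\bigl(4(x_1-x_2)\bigr)^{x_1-x_2}$, which is impossible for coprime $x_1,x_2$ with $x_1\ge3$ (compare signs when $x_1$ is odd; use that $x_1$ must be a power of $2$, and then a direct check, when $x_1$ is even). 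For the remaining cases (now $y_1\ge3$, hence $x_1\ge4$) I would prove $\CC$ has positive genus either by Riemann--Hurwitz for the degree-$y_1$ projection $(X,Y)\mapsto X$, collecting the ramification at $[0:1:0]$ and above the roots of $y_1Y^{y_1-y_2}+y_2$ and of $Y$, or, more cleanly, by the Bilu--Tichy theorem, which reduces positivity of the genus to verifying that the indecomposable pair $(P,Q)$ is not one of the finitely many ``standard pairs''. The step I expect to be the main obstacle is precisely this non-degeneracy check: one must make sure that the coprimality conditions, together with $x_1>y_1$, exclude every ``matching'' decomposition of $P$ and $Q$ --- the prototype of the excluded degeneracy being $X^4+X^2=(X^2)^2+(X^2)$, which evades our hypotheses only because $\gcd(4,2)\ne1$.
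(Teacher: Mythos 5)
Your overall strategy---reading \eqref{eq:ab1} as an integral point on the plane curve $P(X)=Q(Y)$ and invoking Siegel---is viable, and your treatment of the subcase $y_1=2$ is correct and in fact sharper than the paper's: the paper also completes the square to get $(2b+1)^2=4a^{x_1}+4a^{x_2}+1$ and applies Lemma~\ref{lem:Siegel}, but it first invokes \cite[Theorem 2]{Peter} to reduce to six explicit exponent pairs and then checks a non-vanishing discriminant case by case, whereas your elimination (a common root $\xi$ of $f$ and $f'$ forces $\xi^{x_1-x_2}=-x_2/x_1$ and $\xi^{x_2}=-x_1/(4(x_1-x_2))$, hence $x_2^{x_2}\bigl(4(x_1-x_2)\bigr)^{x_1-x_2}=(-1)^{x_1}x_1^{x_1}$, which $\gcd(x_1,x_2)=1$ and $x_1\ge 3$ rule out) handles all admissible pairs uniformly, showing $f$ is in fact separable.

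The genuine gap is the case $y_1\ge 3$. There the paper simply quotes \cite[Theorem 1]{Peter} (finiteness for equal values of trinomials, itself proved via the Bilu--Tichy criterion), and you have correctly identified that the entire content of the lemma in this range is the non-degeneracy verification---exclusion of standard pairs, or equivalently positive genus together with enough places at infinity---but you do not carry it out, and it is not a formality; as written you have reduced the lemma to precisely the statement the paper cites. Two of your supporting assertions also need repair if you pursue the direct route: (i) indecomposability of $P$ and $Q$ does \emph{not} imply irreducibility of $P(X)-Q(Y)$ over $\C$ (this is the classical subtlety behind the Davenport--Lewis--Schinzel and Fried theory of reducibility of $P(X)-Q(Y)$), so you must either prove irreducibility or run Siegel's theorem component by component; (ii) a Riemann--Hurwitz count for the projection $(X,Y)\mapsto X$ must also account for singular points of the affine plane model, not only the ramification at infinity and over the critical values you list. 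None of these steps is expected to fail---the statement is true---but to close the argument you should either complete the standard-pairs check or, as the paper does, cite \cite[Theorem 1]{Peter} at exactly this point.
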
 

\begin{proof}
Note that, since $x_1>y_1$ and $y_1>y_2 \ge 1$, we have $x_1> y_1 \ge 2$. 
If $y_1\ge 3$, then, by \cite[Theorem 1]{Peter}, the equation 
$$
a^{x_1} + a^{x_2} = b^{y_1} + b^{y_2}
$$
has only finitely many positive integer solutions $(a,b)$. 

Next, let $y_1=2$. Then, $y_2=1$, and thus the equation \eqref{eq:ab1} becomes 
\begin{equation}   \label{eq:ab2}
a^{x_1} + a^{x_2} = b^2 +b 
\end{equation}
with unknowns $a,b$. 
If $x_1=2x_2$, then, since $\gcd(x_1,x_2)=1$, we must have $x_1=2$, 
which contradicts with $x_1>y_1=2$. So, we can assume that $x_1 \ne 2x_2$.
Then, using \cite[Theorem 2]{Peter} and noticing $x_1 \ge 3$, we only need to consider the following cases: 
\begin{equation}  \label{eq:x1x2}
\textrm{$(x_1,x_2)=(3,1), (3,2), (4,1), (4,3), (6,2)$, and $(6,4)$}. 
\end{equation} 
In order to apply Lemma \ref{lem:Siegel}, we rewrite \eqref{eq:ab2} as 
\begin{equation}   \label{eq:ab3}
 4a^{x_1} + 4a^{x_2} + 1 = (2b+1)^2. 
\end{equation}
For any case of $(x_1,x_2)$ listed in \eqref{eq:x1x2}, the left-hand side of \eqref{eq:ab3} is in fact a  polynomial 
in $a$. By
computing its discriminant, one can see that it is non-zero, so the polynomial $4a^{x_1} + 4a^{x_2} + 1$ has at least three simple roots. Thus, by Lemma \ref{lem:Siegel}, 
the equation \eqref{eq:ab3} has only finitely many integer solutions $(a,b)$. This completes the proof of the lemma. 
\end{proof}

The following lemma is a direct analogue of \cite[Theorem 6.1]{Bugeaud},
where the equation $a^{x_1} - a^{x_2} = b^{y_1} - b^{y_2}$
instead of \eqref{eq:ab4} have been considered. 

\begin{lemma} \label{lem:ABC}
Under the $ABC$ conjecture, the equation 
\begin{equation}  \label{eq:ab4}
a^{x_1} + a^{x_2} = b^{y_1} + b^{y_2}
\end{equation}
has only finitely many positive integer solutions $(a,b,x_1,x_2,y_1,y_2)$ with $a>1,b>1,x_1>x_2,y_1>y_2$ and $a^{x_1} \ne b^{y_1}$. 
\end{lemma}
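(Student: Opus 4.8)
The plan is to follow the strategy of Bugeaud's proof of \cite[Theorem 6.1]{Bugeaud} and to reduce the equation \eqref{eq:ab4} to a controlled finite list of exceptional shapes by means of the $ABC$ conjecture. Write the equation as $a^{x_2}(a^{x_1-x_2}+1)=b^{y_2}(b^{y_1-y_2}+1)$, so that we may first pass to the \emph{coprimified} version by letting $g=\gcd(a^{x_2},b^{y_2})$ and so on; after this reduction one is looking at a relation of the form $A+B=C$ with $A,B,C$ built from the pairwise-coprime pieces of $a^{x_1},a^{x_2},b^{y_1},b^{y_2}$. The key point is that the radical $\prod_{p\mid ABC}p$ is bounded by a fixed power of $ab$ (indeed by $(ab)^{1}$, since every prime dividing $ABC$ already divides $ab$), whereas the size $\max\{|A|,|B|,|C|\}$ is of order $a^{x_1}$ (using $x_1>x_2$, $y_1>y_2$, and the hypothesis $a^{x_1}\ne b^{y_1}$ to rule out the trivial cancellation). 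Feeding this into the $ABC$ conjecture with a small $\varepsilon$ forces $a^{x_1}\le K_\varepsilon (ab)^{1+\varepsilon}$, and symmetrically $b^{y_1}\le K_\varepsilon(ab)^{1+\varepsilon}$; multiplying and taking logarithms bounds $x_1\log a$ and $y_1\log b$ linearly in $\log a+\log b$, which — once $a,b\ge 2$ — caps the exponents $x_1,x_2,y_1,y_2$ by an absolute constant.

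Once the exponents lie in a finite set, only finitely many tuples $(x_1,x_2,y_1,y_2)$ remain, and for each of them \eqref{eq:ab4} becomes a fixed Diophantine equation in the two variables $a,b$. For each such fixed shape I would invoke the already-established finiteness for the two-variable problem: when $\gcd(x_1,x_2)=\gcd(y_1,y_2)=1$ and $x_1>y_1$ this is exactly Lemma~\ref{lem:ab}, and the cases failing these coprimality or ordering conditions are handled by substituting $a\mapsto a^{e}$, $b\mapsto b^{f}$ to reduce to the coprime case, or by symmetry in $a\leftrightarrow b$. The finitely many remaining genuinely exceptional equations (for instance those where $x_1=2x_2$ collapses the left side to a square, or small-exponent coincidences) are finite in number and each is resolved either by Lemma~\ref{lem:Siegel} applied to a suitable hyperelliptic model $Y^2=f(a)$ as in the proof of Lemma~\ref{lem:ab}, or by Mih{\u a}ilescu's theorem Lemma~\ref{lem:Catalan} when the equation degenerates to a difference of two perfect powers equal to a constant.

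The main obstacle is the bookkeeping in the first step: one must set up the coprime decomposition carefully so that the $ABC$ input really does apply (the three quantities must be pairwise coprime, or at least have bounded gcd), and one must check that the hypothesis $a^{x_1}\ne b^{y_1}$ genuinely prevents the dominant terms from cancelling — otherwise the estimate $\max\{|A|,|B|,|C|\}\asymp a^{x_1}$ fails and no contradiction is reached. A secondary technical point is that the reduction to the coprime exponent case in the final step changes the roles of $x_1>y_1$ versus $y_1>x_1$, so one has to confirm that Lemma~\ref{lem:ab} (or its $a\leftrightarrow b$ mirror) covers every residual tuple; since the number of tuples is finite and explicit, this is a routine but slightly tedious verification rather than a genuine difficulty.
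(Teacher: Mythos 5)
Your overall route is the same as the paper's: use the $ABC$ conjecture (via Steps 1 and 2 of Bugeaud--Luca's proof of their Theorem 6.1) to bound $x_1$ and $y_1$, then for each of the finitely many exponent tuples reduce to $\gcd(x_1,x_2)=\gcd(y_1,y_2)=1$ by the substitution $a\mapsto a^{\gcd(x_1,x_2)}$ and invoke Lemma~\ref{lem:ab} (or its $a\leftrightarrow b$ mirror). However, there is a concrete gap in the second stage: after coprimification you may land in the case $x_1=y_1$, which is covered neither by Lemma~\ref{lem:ab} (it requires $x_1>y_1$ strictly) nor by its mirror, and it is not one of the "exceptional shapes" that Siegel's theorem or Mih\u ailescu's theorem can resolve. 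Indeed, when $x_1=y_1$ and $x_2=y_2$ the equation \eqref{eq:ab4} has infinitely many solutions on the diagonal $a=b$, so finiteness is simply false there unless you invoke the hypothesis $a^{x_1}\ne b^{y_1}$ a second time. The paper disposes of the whole case $x_1=y_1$ with a one-line monotonicity argument: $a^{x_1}\ne b^{y_1}$ forces $a\ne b$, say $a>b$, and then $a^{x_1}+a^{x_2}>a^{x_1}\ge(b+1)^{y_1}>b^{y_1}+b^{y_2}$. You need some such step; as written, your case analysis does not close.

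A secondary problem is your sketch of the $ABC$ input. The radical of $ABC$ is \emph{not} bounded by $ab$: if you decompose via $a^{x_2}(a^{x_1-x_2}+1)=b^{y_2}(b^{y_1-y_2}+1)$, the factors $a^{x_1-x_2}+1$ and $b^{y_1-y_2}+1$ contribute primes that need not divide $ab$. The working decomposition is $a^{x_1}-b^{y_1}=b^{y_2}-a^{x_2}$ (divided by the common gcd), where the right-hand side is nonzero precisely because $a^{x_1}\ne b^{y_1}$, and the radical is bounded by $ab$ times $\max\{a^{x_2},b^{y_2}\}$, which is subdominant since $x_2<x_1$ and $y_2<y_1$; that is what yields bounded exponents. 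Since the paper merely cites Bugeaud's Steps 1--2 for this, your deferral to that source is acceptable, but the explicit radical bound you wrote down would not survive scrutiny.
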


\begin{proof}
First, applying the same arguments  as those in Step 1 and Step 2 of the proof of \cite[Theorem 6.1]{Bugeaud}, we can prove that, 
under the $ABC$ conjecture, both $x_1$ and $y_1$ are bounded from above. 

Next, let us fix positive integers $x_1,x_2,y_1,y_2$, where $x_1>x_2, y_1>y_2$. 
If $\gcd(x_1,x_2)>1$, then in \eqref{eq:ab4} we can replace $a$ by $a^{\gcd(x_1,x_2)}$. 
So, without loss of generality, we can assume that $\gcd(x_1,x_2)=1$ and $\gcd(y_1,y_2)=1$.  
If $x_1=y_1$, then by $a^{x_1} \ne b^{y_1}$ we have $a\ne b$, say $a>b$, and so 
$$
a^{x_1} + a^{x_2} > a^{x_1} \ge (b+1)^{x_1} = (b+1)^{y_1} > b^{y_1} + b^{y_2}, 
$$
which implies that there is no such integer solution $(a,b)$. 
Thus, we can further assume that $x_1 \ne y_1$, say, $x_1 > y_1$. 
Then, by Lemma \ref{lem:ab}, the equation 
\begin{equation*} 
a^{x_1} + a^{x_2} = b^{y_1} + b^{y_2}
\end{equation*}
has only finitely many positive integer solutions $(a,b)$. 
This concludes the proof. 
\end{proof} 

The next corollary follows from Lemma~\ref{lem:ABC} and \cite[Theorem 6.1]{Bugeaud}. 

\begin{corollary} \label{cor:N+-}
Under the $ABC$ conjecture, for each sufficiently large $d$ we have $N^{+}(d) \leq 1$ and $N^{-}(d) \leq 1$. 
\end{corollary}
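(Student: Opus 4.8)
The plan is to derive Corollary~\ref{cor:N+-} from the two finiteness statements already at our disposal: Lemma~\ref{lem:ABC}, which handles the equation $a^{x_1}+a^{x_2}=b^{y_1}+b^{y_2}$, and \cite[Theorem 6.1]{Bugeaud}, which handles the companion equation $a^{x_1}-a^{x_2}=b^{y_1}-b^{y_2}$. More precisely, I would prove that the set of integers $d$ with $N^{+}(d)\ge 2$ is finite, and, by the same token, that the set of $d$ with $N^{-}(d)\ge 2$ is finite; granting this, the corollary is immediate. I will describe the argument for $N^{+}$, since the case of $N^{-}$ is word-for-word parallel.

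First I would fix a $d$ with $N^{+}(d)\ge 2$ and choose two distinct primitive solutions of \eqref{eq:Pillai1}: one of base $a\ge 2$, giving $a^{x_1}+a^{x_2}=d$ with $x_1>x_2\ge 1$, and one of base $b\ge 2$, giving $b^{y_1}+b^{y_2}=d$ with $y_1>y_2\ge 1$; by primitivity, neither $a$ nor $b$ is a perfect power. The crucial step is to check that such a pair is non-degenerate in the sense required by Lemma~\ref{lem:ABC}, namely that $a^{x_1}\ne b^{y_1}$. Indeed, if $a^{x_1}=b^{y_1}=:N$, then $a^{x_2}=d-a^{x_1}=d-b^{y_1}=b^{y_2}$; writing $N=\prod_p p^{e_p}$ and using that neither $a$ nor $b$ is a perfect power, one finds that $x_1$ and $y_1$ must both equal $\gcd_{p \mid N} e_p$, so that $a=b$, $x_1=y_1$, and then $x_2=y_2$ --- contradicting the distinctness of the two solutions.

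Having established this, every $d$ with $N^{+}(d)\ge 2$ can be written as $d=a^{x_1}+a^{x_2}$ for some tuple $(a,b,x_1,x_2,y_1,y_2)$ with $a>1$, $b>1$, $x_1>x_2$, $y_1>y_2$, $a^{x_1}\ne b^{y_1}$ and $a^{x_1}+a^{x_2}=b^{y_1}+b^{y_2}$. Under the $ABC$ conjecture, Lemma~\ref{lem:ABC} guarantees that there are only finitely many such tuples, hence only finitely many such values of $d$, so $N^{+}(d)\le 1$ for all sufficiently large $d$. For $N^{-}(d)$ I would run the identical argument with \eqref{eq:Pillai2} in place of \eqref{eq:Pillai1} and \cite[Theorem 6.1]{Bugeaud} in place of Lemma~\ref{lem:ABC}, which gives $N^{-}(d)\le 1$ for all sufficiently large $d$ and completes the proof.

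The only point that genuinely requires care is the non-degeneracy check in the second step: a pair of distinct primitive solutions automatically satisfies $a^{x_1}\ne b^{y_1}$, and this is precisely where the clause ``$g$ is not a perfect power'' in the definition of a primitive solution is used --- without it one could not invoke Lemma~\ref{lem:ABC} or \cite[Theorem 6.1]{Bugeaud}. Everything else is routine: the passage from ``finitely many solution tuples'' to ``$N^{\pm}(d)\le 1$ for $d$ large'' is just an unwinding of the definition of $N^{\pm}(d)$, since a finite set of tuples yields only finitely many values of $d=a^{x_1}\pm a^{x_2}$. Thus the substance of the statement lies in Lemma~\ref{lem:ABC} and \cite[Theorem 6.1]{Bugeaud}, and the corollary itself presents no serious obstacle.
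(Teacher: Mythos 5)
Your proof is correct and follows essentially the same route as the paper: reduce the existence of two distinct primitive solutions for the same $d$ to a solution tuple of $a^{x_1}+a^{x_2}=b^{y_1}+b^{y_2}$ (resp.\ the minus version) and invoke the finiteness from Lemma~\ref{lem:ABC} (resp.\ \cite[Theorem 6.1]{Bugeaud}). The only difference is that you explicitly verify the non-degeneracy condition $a^{x_1}\ne b^{y_1}$ using primitivity, a detail the paper's proof leaves implicit in the phrase ``with restrictions as in Lemma~\ref{lem:ABC}''; your verification is a worthwhile addition and is correct.
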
 

\begin{proof}
By Lemma~\ref{lem:ABC}, under the $ABC$ conjecture, there are only finitely many positive integer solutions of \eqref{eq:ab4}. 
So, excluding these solutions, for large enough $d$ there will be no solutions $(a,b,x_1,x_2,y_1,y_2)$ of the equation 
           $a^{x_1} + a^{x_2} = b^{y_1} + b^{y_2} = d$
with restrictions as in Lemma~\ref{lem:ABC}. 
This yields $N^{+}(d) \le 1$ for $d$ large enough. Similar argument implies $N^{-}(d) \le 1$, by \cite[Theorem 6.1]{Bugeaud}.
\end{proof}

We are now ready to give a conditional uniform upper bound for $M(d)$. 

\begin{theorem}  \label{thm:uniform}
Under the $ABC$ conjecture, there is a positive integer $C_1$ such that for any integer $d \in \N$ we have $M(d)\le C_1$. Moreover, under the $ABC$ conjecture, we have
$M(d) \le 9$ for $d$ large enough. 
\end{theorem}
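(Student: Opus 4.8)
The plan is to combine the explicit formula of Lemma~\ref{lem:size} with the conditional bound on $N^{+}(d)$ and $N^{-}(d)$ coming from the $ABC$ conjecture. Recall that for every $d\geq 3$ we have the clean identity
\[
M(d) = 2N^{+}(d) + 2N^{-}(d) + 4 + \delta(d),
\]
with $\delta(d)\in\{0,1\}$, while $M(1)=2$ and $M(2)=5$ are finite by inspection. Hence a uniform bound on $M(d)$ is equivalent to a uniform bound on $N^{+}(d)+N^{-}(d)$, and the ``$M(d)\le 9$ for $d$ large'' assertion is exactly the statement that $N^{+}(d)\le 1$ and $N^{-}(d)\le 1$ once $d$ is large (giving $M(d)\le 2+2+4+1=9$).

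The first step is therefore to invoke Corollary~\ref{cor:N+-}: under the $ABC$ conjecture there is an explicit threshold $d_0$ such that $N^{+}(d)\le 1$ and $N^{-}(d)\le 1$ for all $d\ge d_0$. Plugging this into the formula of Lemma~\ref{lem:size} immediately yields $M(d)\le 9$ for all $d\ge d_0$, which is the ``moreover'' part of the theorem. For the first part we must also control the finitely many exceptional values $d$ with $3\le d<d_0$ (together with $d=1,2$). For each such $d$ the set $\cM(d)$ is finite by Corollary~\ref{darkart}, so $M(d)$ is a finite number; taking
\[
C_1 = \max\Bigl\{\,9,\ \max_{1\le d<d_0} M(d)\,\Bigr\}
\]
gives a single constant that works for every $d\in\N$ (and for $d<0$ as well, since $M(d)=M(-d)$). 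One can even be slightly more careful and note that the $M(d)$ for $d<d_0$ are all bounded by $2N^{+}(d)+2N^{-}(d)+5$ where $N^{\pm}(d)\le \log_2 d$ trivially, so the maximum is indeed finite and in principle effective once the $ABC$ constant is made effective.

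The only real subtlety — and the place where all the work has already been done in the lemmas above — is Corollary~\ref{cor:N+-} itself: one needs that beyond finitely many $d$, the Pillai-type equation $g^{y}\pm g^{x}=d$ has at most one primitive solution. This rests on Lemma~\ref{lem:ABC} (for the ``$+$'' case) and on \cite[Theorem 6.1]{Bugeaud} (for the ``$-$'' case), both of which assert that the equation $a^{x_1}\pm a^{x_2}=b^{y_1}\pm b^{y_2}$ has only finitely many solutions with $a^{x_1}\ne b^{y_1}$; since two distinct primitive solutions of $g^{y}\pm g^{x}=d$ would produce exactly such a coincidence with the common value $d$, only finitely many $d$ can admit two of them. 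Thus the main obstacle is entirely absorbed into the $ABC$-conditional finiteness of Lemma~\ref{lem:ABC}, and the proof of Theorem~\ref{thm:uniform} is then just the bookkeeping described above: apply Corollary~\ref{cor:N+-}, substitute into Lemma~\ref{lem:size}, and handle the finitely many small $d$ by Corollary~\ref{darkart}.
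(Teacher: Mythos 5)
Your proof is correct and follows essentially the same route as the paper: apply Corollary~\ref{cor:N+-} together with Lemma~\ref{lem:size} to get $M(d)\le 9$ for $d\ge d_0$, and bound the finitely many remaining $d$ by the finiteness of each $\cM(d)$ (the paper takes $\max_{d<d_0}N^{\pm}(d)$ rather than $\max_{d<d_0}M(d)$, an immaterial difference). The only flaw is your parenthetical claim that $N^{\pm}(d)\le\log_2 d$ ``trivially'' --- the obvious injection sends a primitive solution to the unitary divisor $g^x$ of $d$, giving only the bound $2^{\omega(d)}-2$, not $\log_2 d$ --- but this aside is not needed for the argument, since the maximum over finitely many finite quantities is finite in any case.
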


\begin{proof}
Take any $d_1$ such that for $d \ge d_1$ the two inequalities in Corollary~\ref{cor:N+-} hold. Set $C_2=\max_{1 \le d < d_1} N^{+}(d)$ and $C_3=\max_{1 \le d < d_1} N^{-}(d)$. (Evidently, we have $C_2<\infty$ and $C_3<\infty$ by Theorems \ref{thm:size2}, \ref{thm:size23} and \ref{thm:upper}.) Therefore, Lemma~\ref{lem:size} implies that $$M(d) \le 2C_2+2C_3+5.$$
This proves the first assertion of the theorem with $C_1=2C_2+2C_3+5$.
For $d \ge d_1$ we have $M(d) \leq 2+2+5=9$, by Corollary~\ref{cor:N+-} and Lemma~\ref{lem:size}, which proves the second assertion of the theorem.
\end{proof}

In Conjecture \ref{conj:Md} below we predict that the integer $C_1$ in Theorem \ref{thm:uniform} can be chosen to be 13 
according to the numerical data. Note that for $d$ large enough 
the constant $9$ of Theorem~\ref{thm:uniform} would be best possible. 
To see this, we can take $d= 3 \cdot 2^r$ with $r \ge 4$. With this choice,  by Theorem \ref{thm:size23} we have $M(d)=9$ for each such $d$. 
Also, we can take $d$ of the form
$n^2+n$, where $n \ge 2$. Then, for each such $d$ we have $N^{+}(d) \ge 1$. Indeed, this is true if $n$ is not a perfect power. If it is, say $n=g^m$, where $m \ge 2$ and $g \ge 2$ is not a perfect power, we still have $N^{+}(d) \ge 1$
in view of $d=g^{2m}+g^m$. By the same argument, the inequality $N^{-}(d) \ge 1$ holds, since $$d=n^2+n=(n+1)^2-(n+1).$$ Consequently, $M(d) \ge 9$ for each $d$ of the form $n^2+n$, $n \ge 2$.  

\subsection{Numerical data and conjectures}

In this section, we want to design an algorithm for computing $M(d),d\in \N$, and perform the corresponding computations. 

From Theorem~\ref{thm:size2} (ii), we only need to compute $M(d)$ for positive even integers $d$. 
Based on Lemma~\ref{lem:size}, we design Algorithm~\ref{alg:Md} for this purpose. 
As one can see, the algorithm is very simple, and essentially it is also an algorithm to solve the equations \eqref{eq:Pillai1} and \eqref{eq:Pillai2}.  
Here, we use PARI/GP \cite{Pari} to implement this algorithm and make the corresponding computations.

\begin{algorithm}
\caption{Computing $M(d)$}
\label{alg:Md}
\begin{algorithmic}[1]
\Require positive even integer $d \ge 4$ (input).
\Ensure $M(d)$ (output).
\State Compute the prime factorization of $d$, say, $d=p_1^{r_1}p_2^{r_2} \cdots p_m^{r_m}$. 
\State Set $A,B$ to be two zero vectors of size $2^m$. 
\State Execute the subsequent three steps by running through all the factors $a$ of $d$ with $\gcd(a,d/a)=1$. 
\State Given such a factor $a$ of $d$, say $a=p_1^{r_1}\cdots p_j^{r_j}$, compute $r=\gcd(r_1,\ldots,r_j)$ and $g=p_1^{r_1/r}\cdots p_j^{r_j/r}$. 
\State Divide $d-a$ repeatedly by $g$ until the quotient is not greater than 1. Then, if the quotient is equal to 1, store $a$ in the vector $A$. 
\State Divide $d+a$ repeatedly by $g$ until the quotient is not greater than 1. Then, if the quotient is equal to 1, store $a$ in the vector $B$. 
\State Count the number of distinct non-zero entries in $A$, say $N_1$, and count the number of distinct non-zero entries in $B$, say $N_2$. 
Return $M(d) = 2(N_1 + N_2) +5$. 
\end{algorithmic}
\end{algorithm}

When using Algorithm~\ref{alg:Md} to compute $M(d)$ for a large range of $d$, to speed up the computation and save the memory we can set $A,B$ to be two zero vectors of size 2 in Step 2 of Algorithm~\ref{alg:Md}, and then let the algorithm return the value of $d$ if the size 2 is not big enough. 
Besides, in Step 3 of Algorithm~\ref{alg:Md} we use the binary representations of integers between 0 and $2^m-1$ to run over all such $2^m$ factors of $d$. For example, the factor corresponding to the binary number $0\ldots 011$ is $p_1^{r_1}p_2^{r_2}$.

In Table \ref{ta:Md}, the first row shows all the possible values of $M(d)$ for positive even integer $d\le 10^{10}$.
 The second row gives the number of such integers $d\le 10^3$ whose $M(d)$ correspond to the values in the first 
 row. Other rows
 have similar meaning.

\begin{table}
\centering
\caption{Statistics of $M(d)$ for positive even integers $d$}
\label{ta:Md}
\begin{tabular}{|c|c|c|c|c|c|}
\hline
$M(d)$ & 5 & 7 & 9 & 11 & 13 \\ \hline

$d\le 10^3$ & 380  & 79  & 33  & 7 & 1 \\ \hline 

$d\le 10^4$ & 4653 & 233  & 103  & 10 & 1 \\ \hline 

$d\le 10^5$ & 49177  & 488  &  323 & 11 & 1 \\ \hline

$d\le 10^6$ & 498015  & 963  & 1010  & 11 & 1 \\ \hline 

$d\le 10^7$ & 4994967  & 1846  & 3175  & 11 & 1 \\ \hline

$d\le 10^8$ & 49986562 & 3410  & 10015  & 12 & 1 \\ \hline 

$d\le 10^9$ & 499961918 &  6427 & 31642  & 12 & 1 \\ \hline

$d\le 10^{10}$ & 4999887540  & 12425  & 100022  & 12 &  1 \\ \hline
\end{tabular}
\end{table}

In particular, we have $M(30)=13$, and $M(d)=11$ if $d$ is one of the following twelve integers: 
$$
6, 12, 24, 132, 210, 240, 252, 6480, 8190, 9702, 78120, 24299970.
$$
In fact, these thirteen integers are of the form $n^2+n$ except for $d=24$ and $d=252$. For example, 
$24299970 = 4929^2+ 4929$. 
Moreover, we used Algorithm \ref{alg:Md} to test all the integers $d=n^2+n$, where $4930 \le n \le 10^8$, and found no examples with $M(d) > 9$.

Furthermore, from Table~\ref{ta:Md} and Theorem~\ref{thm:size2} we see that for any positive integer $d\le 10^{10}$ we have 
$$
M(d) \le 13.
$$

From Table~\ref{ta:Md}, one can also observe the following interesting pheno\-me\-non. 
Corresponding to the values $5, 7, 9$, the quotients of the numbers of such integers $d$ in two nearby rows 
are very close to $10, 2, 3$, respectively.  

Based on our computations, we pose two conjectures on the equations \eqref{eq:Pillai1} and \eqref{eq:Pillai2} as follows, 
which are of independent interest.   

\begin{conjecture} \label{conj:Pillai1}
For any given integer $d\ge 1$, we have $N^{+}(d) \le 2$.
\end{conjecture}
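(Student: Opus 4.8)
The plan is to show that no $d$ admits three pairwise distinct primitive solutions of \eqref{eq:Pillai1}. Writing such a solution $(g,x,y)$ as $d=g^{x}(g^{y-x}+1)$, one sees that $g^{x}$ is a unitary divisor of $d$ whose primitive base (the unique $h\ge 2$, not a perfect power, of which $g^{x}$ is a power) is $g$ itself; hence $g^{x}$, and with it $(g,x)$ and then $y$, are determined by that divisor. The first point I would record is that two \emph{distinct} primitive solutions must have distinct bases: if $(g,x_1,y_1)$ and $(g,x_2,y_2)$ both solve \eqref{eq:Pillai1} with the same $g$, then $g^{y_1}+g^{x_1}=g^{y_2}+g^{x_2}$ give two base-$g$ expansions of $d$ having a single digit $1$ in positions $x_i$ and $y_i$ (no carrying, since $y_i>x_i$), so uniqueness of the base-$g$ expansion forces $x_1=x_2$ and $y_1=y_2$. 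Thus $N^{+}(d)\ge 3$ would produce three pairwise distinct bases $g_1,g_2,g_3\ge 2$, none a perfect power, with $g_i^{y_i}+g_i^{x_i}=d$ and $y_i>x_i\ge 1$.

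I would then feed the two equalities
\[ g_1^{y_1}+g_1^{x_1}=g_2^{y_2}+g_2^{x_2},\qquad g_2^{y_2}+g_2^{x_2}=g_3^{y_3}+g_3^{x_3} \]
into Lemma~\ref{lem:ABC}: each is an instance of \eqref{eq:ab4} with $a,b>1$, $x_1>x_2$, $y_1>y_2$ and $a^{x_1}\ne b^{y_1}$ (equality of the top terms would, via the equation, force equality of the bottom terms too, hence of the primitive bases, contradicting distinctness). Under the $ABC$ conjecture each such sextuple lies in one fixed finite set, so $g_1,g_2,g_3$ — and therefore $d=g_1^{y_1}+g_1^{x_1}$ — are confined to a finite set; equivalently, by Corollary~\ref{cor:N+-}, $N^{+}(d)\le 1$ for all $d$ larger than some threshold $d_1$. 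So, assuming $ABC$, the conjecture would reduce to verifying the finitely many $d\le d_1$, and the hope is that $d_1$ can be taken small enough for the computation recorded in Table~\ref{ta:Md} to suffice.

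The main obstacle is that $d_1$ is ineffective: the finiteness in Lemma~\ref{lem:ABC} runs through Lemma~\ref{lem:ab}, whose final step rests on Siegel's theorem (Lemma~\ref{lem:Siegel}), which gives no bound on the size of solutions. To remove this one would have to make every step effective: bound $\max(y_1,y_2,y_3)$ explicitly — ideally by an absolute constant — by replacing the $ABC$ input with Baker-type lower bounds for linear forms in two logarithms, in the spirit of Bugeaud's treatment of $a^{x_1}-a^{x_2}=b^{y_1}-b^{y_2}$; and then, for each of the finitely many surviving exponent patterns, resolve the equation effectively, upgrading the appeals to Siegel's theorem and to Peter's results inside Lemma~\ref{lem:ab} to explicit bounds on $a$ and $b$ via Baker's method applied to the hyperelliptic equations $4a^{x_1}+4a^{x_2}+1=(2b+1)^{2}$. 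The hard part is exactly this last effectivization and then shrinking the resulting bound on $d$ to within computational reach; pending that, Conjecture~\ref{conj:Pillai1} seems provable unconditionally only when $d$ has at most two distinct prime factors (as one can extract from Theorems~\ref{thm:size2} and~\ref{thm:size23}), and in full generality only conditionally on $ABC$ and ineffectively.
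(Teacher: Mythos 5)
This statement is posed in the paper as a \emph{conjecture}, and the paper offers no proof of it: the only support given is the numerical verification that $N^{+}(d)\le 2$ for all $d\le 10^{10}$ (Tables~\ref{ta:Md} and~\ref{ta:N+}), together with the conditional, asymptotic statement of Corollary~\ref{cor:N+-} that under the $ABC$ conjecture $N^{+}(d)\le 1$ for all sufficiently large $d$, and the remark after Theorem~\ref{thm:uniform} that under $ABC$ some uniform bound $N^{+}(d)\le C_4$ holds for all $d$. Your proposal does not prove the statement either, and you say so explicitly; that is the correct position, and there is no proof in the paper against which to fault you.

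Your conditional analysis reproduces the paper's evidence accurately and in places makes it more explicit. The observation that two distinct primitive solutions of \eqref{eq:Pillai1} must have distinct bases (via uniqueness of the base-$g$ expansion of $d=g^{y}+g^{x}$ with $y>x\ge 1$) is correct, as is the verification that the resulting equalities $g_i^{y_i}+g_i^{x_i}=g_j^{y_j}+g_j^{x_j}$ satisfy the hypothesis $a^{x_1}\ne b^{y_1}$ of Lemma~\ref{lem:ABC} (equal leading terms would force equal primitive bases, since every integer $\ge 2$ is uniquely a power of a non-perfect-power). Feeding this into Lemma~\ref{lem:ABC} gives exactly Corollary~\ref{cor:N+-}. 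Your diagnosis of the obstruction is also the right one: the threshold $d_1$ beyond which $N^{+}(d)\le 1$ is ineffective, both because the $ABC$ constant $K_\varepsilon$ is unspecified and because Lemma~\ref{lem:ab} rests on Siegel's theorem (Lemma~\ref{lem:Siegel}); so even conditionally one cannot reduce the conjecture to a finite computation. The unconditional cases you list ($d$ odd, where $N^{+}(d)=0$ since $g^{y}+g^{x}$ is always even for $x\ge 1$, and $d=2^{r}p^{s}$, where \eqref{durn1} gives $N^{+}(d)\le 2$ via Mih\u{a}ilescu's theorem) are indeed the only ones the paper settles; for general even $d$ with $m\ge 3$ prime factors the paper's Theorem~\ref{thm:upper} only yields $N^{+}(d)+N^{-}(d)\le 2^{m}-2$, which is far from the conjectured bound. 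In short: there is no gap in your proposal relative to the paper, because the gap is in the state of the art, and you have located it precisely.
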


\begin{conjecture} \label{conj:Pillai2}
For any given integer $d\ge 1$, we have $N^{-}(d) \le 2$.
\end{conjecture}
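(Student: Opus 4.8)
The plan is to reduce Conjecture~\ref{conj:Pillai2} to the statement that three or more primitive solutions of \eqref{eq:Pillai2} cannot occur once $d$ is large, and then to settle the remaining finite range by the explicit computation of Algorithm~\ref{alg:Md} (already run up to $10^{10}$). First I would record the structural reformulation. Writing a primitive solution as $g^{y}-g^{x}=g^{x}(g^{z}-1)=d$ with $z=y-x\ge 1$ and $\gcd(g^{x},g^{z}-1)=1$, one sees that $u:=g^{x}$ is a unitary divisor of $d$ with $1<u\le d$; moreover distinct primitive solutions have \emph{distinct bases} $g$, since for every prime $p\mid g$ one has $v_{p}(d)=x\,v_{p}(g)$, which determines $x=\gcd_{p\mid g}v_{p}(d)$ and then $z$ from $g^{z}-1=d/g^{x}$. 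Hence three primitive solutions would produce three distinct non-perfect-power bases $g_{1},g_{2},g_{3}\ge 2$ with $d=g_{i}^{y_{i}}-g_{i}^{x_{i}}$, equivalently three perfect powers $d+u_{i}=g_{i}^{y_{i}}$ lying in $(d,2d]$, each being a power of the reduced base of the corresponding unitary divisor $u_{i}$ of $d$. This is exactly the reformulation underlying Theorem~\ref{thm:upper}.

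The second step is the one the paper already carries out under the $ABC$ conjecture, and I would follow it. Any two of three primitive solutions of \eqref{eq:Pillai2} form a solution of $a^{x}-a^{x'}=b^{y}-b^{y'}$ with both sides equal to $d$, $a\ne b$, and $a^{x}\ne b^{y}$ — precisely the equation that \cite[Theorem 6.1]{Bugeaud}, the mechanism behind Corollary~\ref{cor:N+-}, shows has only finitely many solutions with unbounded parameters under the $ABC$ conjecture. So I would use $ABC$ to bound the exponents in any such coincidence and then, combining this with the Siegel-type finiteness for each fixed exponent pattern (analogous to Lemma~\ref{lem:ab}, which handles the additive equation), conclude that the presence of a \emph{third} primitive solution confines $d$ below an explicit bound; the residual finite range of $d$ is disposed of by Algorithm~\ref{alg:Md}. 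In fact under $ABC$ one even has $N^{-}(d)\le 1$ for all large $d$ by Corollary~\ref{cor:N+-}, so on $ABC$ the conjecture reduces entirely to a finite verification, and that verification has been carried out up to $10^{10}$.

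The main obstacle is to make this argument \emph{unconditional}. The gaps between the three perfect powers $d+u_{i}$ are of size $|u_{i}-u_{j}|$, which can be comparable to $d$ itself, so neither elementary estimates nor lower bounds for linear forms in logarithms bound $d$ directly; in the proof of Corollary~\ref{cor:N+-} the role of bounding the exponents (and hence $d$) is played precisely by the $ABC$ conjecture, which is moreover used there ineffectively. An unconditional proof therefore seems to require new input — either an elementary identity obstructing three simultaneous representations $d=g_{i}^{y_{i}}-g_{i}^{x_{i}}$, of which none is apparent (the cases $d=6,24,30$ already realise two primitive solutions, with the exponent $z$ equal to $1$ or $\ge 2$ and with bases of either parity), or a strengthening of gap results for perfect powers in the short interval $(d,2d]$ tailored to the unitary-divisor constraint just described. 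I would accordingly expect the realistic outcome to be the $ABC$-conditional refinement ``$N^{-}(d)\le 2$ for every $d$, with equality for only finitely many, explicitly bounded, $d$'', with the fully unconditional Conjecture~\ref{conj:Pillai2} remaining open.
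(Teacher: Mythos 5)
The statement you were asked about is a conjecture, and the paper offers no proof of it: it is supported only by the numerical verification of $N^{-}(d)\le 2$ for all $d\le 10^{10}$ via Algorithm~\ref{alg:Md}, together with the $ABC$-conditional (and ineffective) Corollary~\ref{cor:N+-} giving $N^{-}(d)\le 1$ for all sufficiently large $d$. Your write-up reaches exactly the same position by exactly the same means --- the unitary-divisor reformulation behind Theorem~\ref{thm:upper}, the appeal to \cite[Theorem 6.1]{Bugeaud} under $ABC$, and the computation to $10^{10}$ --- and you correctly conclude (your one internal slip being the earlier claim that the residual range is ``disposed of'' by the algorithm, which the ineffectivity you yourself note rules out) that the unconditional conjecture remains open.
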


\begin{table}
\centering
\caption{The values of $d\le 10^{10}$ with $N^{+}(d) = 2$}
\label{ta:N+}
\begin{tabular}{|c|c|}
\hline
$d$ & \textrm{Primitive integer solutions $(g,x,y)$ of \eqref{eq:Pillai1}} \\ \hline

$12$ & (2,2,3), (3,1,2)  \\ \hline 

$30$ & (3,1,3), (5,1,2)  \\ \hline 

$36$ & (2,2,5), (3,2,3)  \\ \hline 

$130$ & (2,1,7), (5,1,3)  \\ \hline

$132$ & (2,2,7), (11,1,2)  \\ \hline

$252$ & (3,2,5), (6,2,3)  \\ \hline

$9702$ & (21,2,3), (98,1,2)  \\ \hline

$65600$ & (2,6,16), (40,2,3)  \\ \hline

\end{tabular}
\end{table}

From our computations, it follows that Conjectures \ref{conj:Pillai1} and \ref{conj:Pillai2} are true for 
all positive integers $d\le 10^{10}$. 
Moreover, it is likely that either $N^{+}(d) = 2$ or $N^{-}(d) = 2$ are very rare events. We collect the values of positive integers $d\le 10^{10}$ for which either $N^{+}(d) = 2$ or $N^{-}(d) = 2$, and the corresponding primitive integer solutions of  the equations \eqref{eq:Pillai1} and \eqref{eq:Pillai2} in Tables \ref{ta:N+} and \ref{ta:N-}, respectively. 
In particular, one can see that 30 is the unique positive integer in the range $[1,10^{10}]$ with $N^{+}(30) = 2$ and $N^{-}(30) = 2$. 
We emphasize that, by Corollary \ref{cor:N+-}, under the $ABC$ conjecture the inequalities $N^{+}(d) \leq 1$ and $N^{-}(d) \leq 1$ hold for each sufficiently large $d$.  The last example in Table~\ref{ta:N-} corresponds
to the solution
$(x,y)=(30,9859)$ on the hyperelliptic curve
$$y^2=4x^5-4x+1.$$ Inserting $y=2 \cdot 4930 -1$ and $x=30$ we get
$4930^2-4930=30^5-30$.

\begin{table}
\centering
\caption{The values of $d\le 10^{10}$ with $N^{-}(d) = 2$}
\label{ta:N-}
\begin{tabular}{|c|c|}
\hline
$d$ & \textrm{Primitive integer solutions $(g,x,y)$ of \eqref{eq:Pillai2}} \\ \hline

$6$ & (2,1,3), (3,1,2)  \\ \hline 

$24$ & (2,3,5), (3,1,3)  \\ \hline 

$30$ & (2,1,5), (6,1,2)  \\ \hline 

$120$ & (2,3,7), (5,1,3)  \\ \hline 

$210$ & (6,1,3), (15,1,2)  \\ \hline 

$240$ & (2,4,8), (3,1,5)  \\ \hline 

$2184$ & (3,1,7), (13,1,3)  \\ \hline 

$6480$ & (3,4,8), (6,4,5)  \\ \hline 

$8190$ & (2,1,13), (91,1,2)  \\ \hline 

$78120$ & (5,1,7), (280,1,2)  \\ \hline 

$24299970$ & (30,1,5), (4930,1,2)  \\ \hline 

\end{tabular}
\end{table}

From the proof of Theorem \ref{thm:uniform} we know that, under the $ABC$ conjecture, 
there exists a positive integer $C_4=\max\{C_2,C_3\}$, which is independent of $d$, such that each of the equations in Conjectures \ref{conj:Pillai1} and \ref{conj:Pillai2} has at most $C_4$ primitive integer solutions. 

Under Conjectures \ref{conj:Pillai1} and \ref{conj:Pillai2} and in view of \eqref{countt}, for any integer $d  \in \N$ we have  
$$
M(d) \le 13,
$$
which is also compatible with our numerical data. 
So, in conclusion we suggest the following conjecture. 

\begin{conjecture}  \label{conj:Md}
For any $d \in \N$ we have 
$M(d) \le 13.$
Moreover, $M(d)=13$ if and only if $d=30$.
\end{conjecture}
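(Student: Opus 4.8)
The plan is to reduce Conjecture~\ref{conj:Md} to the two Diophantine Conjectures~\ref{conj:Pillai1} and~\ref{conj:Pillai2}, then to discuss how one might make those effective, and finally to isolate the equality case $d=30$. By Theorem~\ref{thm:size2} the bound $M(d)\le 13$ is trivial unless $d$ is an even integer $\ge 4$; for such $d$, Lemma~\ref{lem:size} gives $M(d)=2N^{+}(d)+2N^{-}(d)+5$, so
\[
M(d)\le 13 \iff N^{+}(d)+N^{-}(d)\le 4 .
\]
In particular Conjectures~\ref{conj:Pillai1} and~\ref{conj:Pillai2} together yield $M(d)\le 13$, and, granting $N^{\pm}(d)\le 2$, one has $M(d)=13$ exactly when $N^{+}(d)=N^{-}(d)=2$. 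It is worth recording which $d$ are genuinely open: $M(d)\le 13$ is already a theorem when $d$ has at most two distinct prime factors (Theorems~\ref{thm:size2} and~\ref{thm:size23}) and when $d$ is square-free with exactly three distinct prime factors (Theorem~\ref{thm:upper}, whose bound is sharp precisely at $d=30$). So the remaining cases are: $d$ with exactly three distinct prime factors that is not square-free, and $d$ with at least four distinct prime factors; for all of these the divisor-counting bound $N^{+}(d)+N^{-}(d)\le|\cD(d)|=2^{m}-2$ from the proof of Theorem~\ref{thm:upper} is much too weak, so new input is required.

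\textbf{Bounding $N^{\pm}(d)$.} For a fixed base $g\ge 2$ a solution of \eqref{eq:Pillai1} (resp.\ \eqref{eq:Pillai2}) is essentially unique: from $d=g^{x}(g^{y-x}\pm 1)$ with $\gcd(g^{x},g^{y-x}\pm 1)=1$ one sees that $g^{x}$ must be the largest divisor of $d$ composed only of primes dividing $g$, which fixes $x$ and then $y$. Hence $N^{+}(d)$ counts the admissible bases $g$, and if $g_1<g_2$ are two such bases for \eqref{eq:Pillai1} then $g_1^{y_1}+g_1^{x_1}=g_2^{y_2}+g_2^{x_2}$; after removing common factors in the exponents this is precisely the equation of Lemma~\ref{lem:ABC}, which under the $ABC$ conjecture has only finitely many solutions with $g_1^{y_1}\ne g_2^{y_2}$, giving $N^{+}(d)\le 1$ for large $d$ (Corollary~\ref{cor:N+-}), and likewise $N^{-}(d)\le 1$ via \cite[Theorem 6.1]{Bugeaud}. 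To upgrade this to the uniform bounds of Conjectures~\ref{conj:Pillai1} and~\ref{conj:Pillai2} one would need an \emph{effective} statement: an explicit threshold $d\le D_0$ beyond which two primitive representations of a given sign cannot coexist, followed by a finite verification that below $D_0$ only $d\in\{12,30,36,\dots\}$ have $N^{+}(d)=2$ and only $d\in\{6,24,30,\dots\}$ have $N^{-}(d)=2$. This effectivity is the main obstacle: neither the $ABC$-conditional argument of \cite{Bugeaud} nor the appeals to Siegel's theorem (Lemma~\ref{lem:Siegel}) and to \cite{Peter} used in the proof of Lemma~\ref{lem:ab} are effective, so at present the bound $13$ cannot be made rigorous beyond the $ABC$-conditional $M(d)\le 9$ for large $d$ of Theorem~\ref{thm:uniform} plus the machine check up to $10^{10}$.

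\textbf{The equality case.} Suppose $M(d)=13$; then $d$ is even with $N^{+}(d)=N^{-}(d)=2$, so
\[
d=g_1^{y_1}+g_1^{x_1}=g_2^{y_2}+g_2^{x_2}=h_1^{v_1}-h_1^{u_1}=h_2^{v_2}-h_2^{u_2},
\]
with $g_1\ne g_2$, $h_1\ne h_2$, and all bases not perfect powers. One splits according to whether each smaller exponent $x_i,u_i$ equals $1$ or is $\ge 2$. Whenever a smaller exponent is $\ge 2$, Mih{\u a}ilescu's theorem (Lemma~\ref{lem:Catalan}) severely restricts that representation — as in the proof of Theorem~\ref{thm:size23} — typically forcing the base to be $2$ or $3$ with small larger exponent. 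In the generic remaining case all four smaller exponents equal $1$, so $d=g_1(g_1^{y_1-1}+1)=g_2(g_2^{y_2-1}+1)=h_1(h_1^{v_1-1}-1)=h_2(h_2^{v_2-1}-1)$ with $g_1,g_2,h_1,h_2$ four distinct divisors of $d$ (they are distinct since, outside the already-settled shape $d=2^{r}\cdot 3$, no $g^{x}$ can occur both in \eqref{eq:Pillai1} and in \eqref{eq:Pillai2}); combining these factorizations with the requirement that $d$ be simultaneously a sum and a difference of powers of two different bases should, after a finite case analysis driven by the effective threshold $D_0$ of the previous step, leave only $d=30=5^{2}+5=3^{3}+3=6^{2}-6=2^{5}-2$. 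Absent such a threshold the case analysis does not terminate, since unboundedly many divisor-combinations must be tested; this is why the statement is posed as a conjecture, and why the current evidence for it is Algorithm~\ref{alg:Md} verifying $M(d)\le 13$ and the uniqueness of $d=30$ for all $d\le 10^{10}$, together with the $ABC$-conditional Theorem~\ref{thm:uniform}.
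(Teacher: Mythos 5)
Your proposal takes essentially the same route as the paper: the statement is posed there as a conjecture, not a theorem, and the paper's own justification is exactly your reduction via Lemma~\ref{lem:size} (for even $d\ge 4$, $M(d)=2N^{+}(d)+2N^{-}(d)+5$, so $M(d)\le 13$ follows from Conjectures~\ref{conj:Pillai1} and~\ref{conj:Pillai2}, with equality forcing $N^{+}(d)=N^{-}(d)=2$), supplemented by the verification up to $10^{10}$ and the $ABC$-conditional bounds. Your reduction steps and your identification of the missing effectivity as the genuine obstruction are both correct and consistent with the paper.
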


In fact, the second part of Conjecture \ref{conj:Md} asserts that 30 is the unique positive integer $d$ satisfying $N^{+}(d) = N^{-}(d) = 2$.

\section*{Acknowledgement} 

The research of M.~S. was supported by the Macquarie University Research Fellowship. 
The authors thank Dr. Alina Ostafe for corresponding the relevant reference \cite{BMZ}. 
They are also grateful to the referee who read the paper very carefully and pointed out several errors and inaccuracies.

\end{document}